  \tikzstyle{block} = [rectangle, draw,
  \tikzstyle{line} = [draw, -latex']
\newtheorem{lemma}{Lemma}[section]
\newtheorem{theorem}[lemma]{Theorem}
\newtheorem{proposition}[lemma]{Proposition}
\newtheorem{cor}[lemma]{Corollary}
\newtheorem{conj}[lemma]{Conjecture}
\newtheorem{claim*}{Claim}
\theoremstyle{definition}
\newtheorem{remark}[lemma]{Remark}
\newcommand{\F}{{\mathbb F}}
\newcommand{\Q}{{\mathbb Q}}
\newcommand{\calN}{{\mathcal N}}
\newcommand{\calO}{{\mathcal O}}
\newcommand{\frakf}{{\mathfrak f}}
\newcommand{\frakfp}{{\mathfrak f'}}
\newcommand{\frakp}{{\mathfrak p}}
\newcommand{\frakq}{{\mathfrak q}}
\newcommand{\frakD}{{\mathfrak D}}
\newcommand{\frakN}{{\mathfrak N}}
\DeclareMathOperator{\Frob}{Frob}
\DeclareMathOperator{\Aut}{Aut}
\DeclareMathOperator{\Gal}{Gal}
\DeclareMathOperator{\Norm}{ Norm}
\DeclareMathOperator{\SL}{SL}
\DeclareMathOperator{\GL}{GL}
\DeclareMathOperator{\nr}{Norm}
\numberwithin{equation}{section}
\numberwithin{table}{section}
\title{	non-trivial Solutions of $Aa^p+Bb^p=Cc^3$ over Number Fields}
\author{Yasemin Kara}
\author{Stef Nomden}
\author{Ekin \"Ozman}
\address{Bogazici University\\
Department of Mathematics\\
Bebek, 34342 \\
Istanbul, Turkiye.}
\email{yasemin.kara@bogazici.edu.tr}
\address{Bernoulli Institute \\
Nijenborgh 9, 9747 AG \\
Groningen, the Netherlands.}
\email{stef.nomden@gmail.com}
\address{Bernoulli Institute \\
Nijenborgh 9, 9747 AG \\
Groningen, the Netherlands.}
\email{e.ozman@rug.nl}
\subjclass[2020]{11D41, 11F75, 11F80, 11G05.}
\keywords{Diophantine equations, modular method, Galois representations.}
\begin{document}

	\maketitle
	\begin{abstract}

	In this paper, we investigate solutions to the Diophantine equation
\[
A a^p + B b^p = C c^3
\]
over number fields using the modular method. Assuming certain standard modularity conjectures, we first establish an asymptotic result for general number fields satisfying an appropriate $S$-unit condition. In particular, we verify that this condition holds for several imaginary quadratic fields.

\medskip
Beyond the asymptotic setting, we also obtain an effective result. Specifically, for the equation
\[
a^p + d b^p = c^3
\]
over \( K = \mathbb{Q}(\sqrt{-d}) \) with \( d \in \{7, 19, 43, 67\} \), we determine an explicit bound (depending on \( d \)) such that no non-trivial solutions of a certain type exist whenever \( p \) exceeds this bound.
\end{abstract}

	\section{Introduction}
	
    The proof of Fermat's Last Theorem in 1995 was a monumental achievement in mathematics. Not only was this the end of the famous open problem that remained for more than 300 years, but it also paved the way to a new strategy of solving certain Diophantine equations. This strategy is called the `modular method' which exploits the relation between modular forms and elliptic curves. Fermat's Last Theorem states that there are no non-trivial ($abc \neq 0$) solutions to the equation $a^p + b^p = c^p$ whenever $p>2$. Assuming the existence of such a solution, Frey constructed a semistable elliptic curve to this solution in his paper \cite{Frey}. In 1990, Ribet showed in \cite{Ribet} that if this elliptic curve is modular, then there is a Galois representation of level two which is modular. This would give a contradiction as there are no newforms of level two. In 1995, Wiles proved in \cite{Wiles} that semistable elliptic curves are modular which was the final piece to prove Fermat's Last Theorem.
    
    This connection between elliptic curves and modular forms is also conjectured over general number fields. Assuming these conjectures, the modular method can be employed to solve other Diophantine equations over number fields. The modular method was used by Freitas and Siksek in \cite{FS} to show that the equation $a^p + b^p = c^p$ has no `asymptotic solutions' over a totally real number field $K$. That is, for a number field $K$, there is a constant $B_K$ depending on $K$ such that whenever $p> B_K$, there is no non-trivial solution to the equation. In \cite{HD}, Deconinck generalized these methods to show that the equation $Aa^p + Bb^p = Cc^p$ has no asymptotic solutions over a totally real field $K$ with $A,B$ and $C$ odd. The modular method also applies to asymptotically solve $a^p + b^p = c^3$ over certain number fields $K$; in \cite{IKO2}, Isik, Kara and Özman showed that there are no asymptotic solutions to this equation over particular number fields $K$. Furthermore, in \cite{moc22} Mocanu proved that the same equation has no asymptotic solutions for totally real number fields $K$ which satisfy a certain condition about $S$-units.

 In this paper, we combine and generalize the approaches of  \cite{IKO2} and \cite{moc22} to prove that the equation $Aa^p + Bb^p = Cc^3$ admits no asymptotic solutions for number fields $K$ satisfying a suitable $S$-unit condition.  In particular, we show that for certain imaginary quadratic fields, this condition is satisfied.

    In addition to this asymptotic result, we also establish an effective result that extends the explicit bounds obtained in \cite{IKO2}. More precisely, we show that for the equation 
\[
a^p + d b^p = c^3
\]
over $K = \mathbb{Q}(\sqrt{-d})$, with $d \in \{7,19,43,67\}$, there exists an explicit bound (depending on $d$) such that there are no non-trivial solutions of a particular type whenever $p$ exceeds this bound.

     \subsection*{Our Results} 
Let $K$ be a number field and let $\mathcal{O}_K$ denote its ring of integers and $\mathcal{O}_K^\times$ denote the group of units in $\mathcal{O}_K$. For $A,B,C$  nonzero elements of $\calO_{K}$
and a prime number $p$, we refer the equation
	  \begin{equation}\label{maineqn}
	  Aa^p+Bb^p=Cc^3
	  \end{equation}
	  as \textit{the generalized Fermat equation over K with coefficients A,B,C and signature (p,p,3)}.  A solution $(a,b,c)$ is
called \textbf{ trivial} if $abc=0$, otherwise \textbf{non-trivial}. A solution $(a,b,c)$ of Equation \ref{maineqn} is called \textbf{primitive} if the ideal generated by $a$, $b$ and $c$ is equal to the unit ideal.

We study solutions $(a,b,c)\in\mathcal{O}_K^3$ to \eqref{maineqn} such that $(a,b,c)$ is primitive. This restriction of solutions has the following reason. We aim to show that there are no non-trivial solutions to \eqref{maineqn} for $p$ larger than a constant depending only on $K,A,B$ and $C$. If we do not require a solution to be primitive, then we obtain an infinite set of uninteresting solutions to \eqref{maineqn}.
In fact, if $p\equiv -1 \pmod{3}$ and $a,b,c \in \mathcal{O}_K$ satisfy $Aa^p+Bb^p=Cc$, then $(ac,bc,c^{\frac{p+1}{3}})$ is a non-primitive solution to $\eqref{maineqn}$. Observe that if $x,y,z \in \mathcal{O}_K$ satisfy $Ax^p+By^p=Cz$, then $(a,b,c)=(xz,yz,z^{\frac{p+1}{2}}) $ satisfies the equation $Aa^p+Bb^p=Cc^2$. Hence, if $p\equiv 1 \pmod{3}$ we find that $(ac,bc,c^{\frac{p+2}{3}})$ is a non-primitive solution to  $\eqref{maineqn}$. Thus, we consider only primitive solutions to \eqref{maineqn}. In \cite{DG}, it was shown that Equation \ref{maineqn} has finitely many primitive solutions.  \\

The nature of our results to \eqref{maineqn} is asymptotic and explicit.  The main asymptotic result  is about some type of solutions over general number fields, and the explicit result is about solutions over some imaginary quadratic fields. Before stating the main results, we introduce some notation. Define $$T_K = \{\mathfrak{P} : \mathfrak{P} \text{ is a prime in } K \text{ and }\mathfrak{P}\mid 3ABC\}.$$ Let $\mathcal{O}_{T_K}$ denote the ring of $T_K$-integers and let $\mathcal{O}_{T_K}^\times$ denote the group of $T_K$-units. Further, let $\text{Cl}_{T_K}(K)$ denote the quotient $\text{Cl}(K)/\langle[\mathfrak{P}]\rangle_{\mathfrak{P}\in T_K}$ and $\text{Cl}_{T_K}(K)[n]$
denote its $n$-torsion. Let $W_K$ be the set $(a,b,c)\in\mathcal{O}_K^3$ such that $(a,b,c)$ is a primitive non-trivial solution to \eqref{maineqn} and every prime in $K$ above $3$ divides $ab$. Our asymptotic result is as follows. 

\begin{theorem}\label{main theorem}
    Let $K$ be a number field with $\textup{Cl}_{T_K}(K)[3]=1$ satisfying Conjectures \ref{conj1} and \ref{conj2}. Suppose further that for every solution $(\alpha,\beta,\gamma) \in\mathcal{O}_{T_K}^\times\times\mathcal{O}_{T_K}^\times\times\mathcal{O}_{T_K}$ to $\alpha+\beta=\gamma^3$, there exists a prime $\mathfrak{L}$ in $K$ above $3$ such that $$|v_\mathfrak{L}(\alpha\beta^{-1})|\leqslant 3v_\mathfrak{L}(3).$$
    Then, there is a constant $V=V(K,A,B,C)>0$ depending only on $K$, $A$, $B$ and $C$ such that  Equation \ref{maineqn} has no non-trivial solutions in $W_K$ for $p>V$. 
\end{theorem}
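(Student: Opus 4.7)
The plan is to run the modular method over $K$, adapting and combining the strategies of \cite{IKO2} and \cite{moc22}. Given a primitive non-trivial solution $(a,b,c)\in W_K$, the first step is to attach a Frey elliptic curve $E=E_{a,b,c}/K$ of signature $(p,p,3)$ in the spirit of Bennett--Vatsal--Yazdani, whose minimal discriminant is essentially $(ab)^{p}$ up to factors supported on $T_K$. The hypothesis defining $W_K$---every prime above $3$ divides $ab$---is precisely what allows a single Frey curve to be used uniformly: after a suitable twist the reduction type of $E$ at primes above $3$ is pinned down independently of $(a,b,c)$, so that the conductor of $\overline{\rho}_{E,p}$ away from its $p$-part is bounded solely in terms of $K,A,B,C$.

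Next, assuming Conjecture \ref{conj1}, the curve $E$ corresponds to a Hilbert eigenform over $K$, and applying the level-lowering Conjecture \ref{conj2} produces an eigenform $\mathfrak{g}$ of parallel weight two and level $\mathcal{N}$ supported in $T_K$. The space of such $\mathfrak{g}$ is finite and independent of $p$. I would split into two cases: if $\mathfrak{g}$ does not arise from an elliptic curve $E'/K$ with full $2$-torsion and conductor dividing $\mathcal{N}$, then comparing Hecke eigenvalues of $\mathfrak{g}$ with Frobenius traces of $E$ at auxiliary primes of good reduction eliminates $\mathfrak{g}$ for $p$ beyond a constant depending only on $K,A,B,C$; otherwise $\overline{\rho}_{E,p}\cong \overline{\rho}_{E',p}$ for such an $E'$. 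In the second case, putting $E'$ in Legendre form and using $\mathrm{Cl}_{T_K}(K)[3]=1$ to upgrade the ideal factorisation of $c$ into a principal relation yields a triple $(\alpha,\beta,\gamma)\in\mathcal{O}_{T_K}^\times\times\mathcal{O}_{T_K}^\times\times\mathcal{O}_{T_K}$ with $\alpha+\beta=\gamma^{3}$, in which $\alpha\beta^{-1}$ encodes the ratio $(Aa^{p})/(Bb^{p})$ up to a $T_K$-unit whose valuations at the primes over $3$ are bounded in terms of $K,A,B,C$ only.

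The $S$-unit hypothesis now supplies a prime $\mathfrak{L}\mid 3$ with $|v_{\mathfrak{L}}(\alpha\beta^{-1})|\le 3v_{\mathfrak{L}}(3)$, which translates to an inequality $p\cdot|v_{\mathfrak{L}}(a/b)|\le C_0$ for a constant $C_0=C_0(K,A,B,C)$. Since $\mathfrak{L}$ lies above $3$ we have $\mathfrak{L}\mid ab$ by the definition of $W_K$, and primitivity of $(a,b,c)$ forces exactly one of $v_{\mathfrak{L}}(a)$, $v_{\mathfrak{L}}(b)$ to be strictly positive, so $|v_{\mathfrak{L}}(a/b)|\ge 1$ and hence $p\le V(K,A,B,C):=C_0$. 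The main obstacle I anticipate is the bookkeeping in the second step: one must check that the residual isomorphism $\overline{\rho}_{E,p}\cong\overline{\rho}_{E',p}$ produced by level lowering can genuinely be promoted to the clean relation $\alpha+\beta=\gamma^{3}$ with the correct cube structure inherited from $c$---this is exactly where $\mathrm{Cl}_{T_K}(K)[3]=1$ is used in an essential way---and simultaneously one must ensure irreducibility of $\overline{\rho}_{E,p}$ for large $p$ via a uniform bound on $K$-rational isogenies of the Frey family. Both of these steps carry most of the technical weight and may force $V$ to be enlarged.
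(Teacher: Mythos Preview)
Your overall shape (Frey curve, irreducibility, level lowering to a finite list of objects depending only on $K,A,B,C$) is right, but the endgame is misidentified in a way that breaks the argument.

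First, for signature $(p,p,3)$ the Frey curve has a $K$-rational point of order $3$, not full $2$-torsion, and the level-lowered curve $E'$ inherits (after possibly replacing it by a $3$-isogenous curve) a $3$-torsion point. There is no Legendre form here; instead one uses the universal model $y^2+exy+dy=x^3$ for curves with a marked $3$-torsion point, whose $j$-invariant is $\lambda(\lambda-24)^3/(\lambda-27)$ with $\lambda=e^3/d$.

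Second, and more seriously, the $S$-unit equation $\alpha+\beta=\gamma^3$ does \emph{not} come from the original solution $(a,b,c)$ or from factoring $c$. It comes entirely from the $j$-invariant of the level-lowered curve $E'$: setting $\mu=\lambda-27$ one shows $\mu\in\mathcal{O}_{T_K}^\times$, and the hypothesis $\mathrm{Cl}_{T_K}(K)[3]=1$ is applied to the ideal $(\lambda)$ (not to $(c)$) to write $\lambda=u\gamma^3$ with $u\in\mathcal{O}_{T_K}^\times$. Dividing $\mu+27=\lambda$ by $u$ gives the $S$-unit equation with $\alpha\beta^{-1}=\mu/27$. In particular $\alpha\beta^{-1}$ has nothing to do with $(Aa^p)/(Bb^p)$, and your inequality $p\cdot|v_{\mathfrak L}(a/b)|\le C_0$ does not follow.

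The actual contradiction is this: the hypothesis $|v_{\mathfrak L}(\alpha\beta^{-1})|\le 3v_{\mathfrak L}(3)$ pins $v_{\mathfrak L}(\mu)$ in $[0,6v_{\mathfrak L}(3)]$, and a short case analysis on the expression $j_{E'}=(\mu+27)(\mu+3)^3/\mu$ then forces $v_{\mathfrak L}(j_{E'})\ge 0$. But the level-lowering step also records that $p\mid\#\overline{\rho}_{E,p}(I_{\mathfrak L})=\#\overline{\rho}_{E',p}(I_{\mathfrak L})$ for every $\mathfrak L\mid 3$ (this is where the $W_K$ condition and $p$ large are used), hence $E'$ has potentially multiplicative reduction at $\mathfrak L$ and $v_{\mathfrak L}(j_{E'})<0$. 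That is the contradiction; no bound on $p$ is extracted from valuations of $a,b$.
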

Theorem \ref{main theorem} generalizes Theorem 2.5 of \cite{Kumar} in the sense that \cite{Kumar} restricts to totally real number fields. What allows this generality in our case is that we use a slightly modified level lowering technique which is used in \cite[Section 3]{IKO2}. A slight downside of this method is that we need to assume two conjectures instead of one. In the totally real case, only one conjecture is assumed due to the level lowering result \cite[Theorem 7]{FS} of Freitas and Siksek for elliptic curves. The restriction of $K$ regarding $T_K$-units in the assumptions of Theorem \ref{main theorem} will be relevant in Section \ref{T-unit section}. The method employed in that section is due to Mocanu in \cite{moc22} and is also used in \cite{Kumar}. In this paper, it is shown that there is a finite number of solutions to $\alpha+\beta=\gamma^3$ with $(\alpha,\beta)\in\mathcal{O}_{T_K}^\times$ and $\gamma\in\mathcal{O}_{T_K}$ up to scaling by cubes. Further, it is shown that this set of solutions is effectively computable. As a consequence, for a given number field $K$, it can be determined if $K$ satisfies the hypothesis of Theorem \ref{main theorem} in finite time.  

Furthermore, by imposing some local constraints, we obtain the following theorem.

\begin{theorem} \label{theorem: special case}
    Let $K$ be a number field with only one prime $\mathfrak{q}$ above $3$ satisfying Conjectures \ref{conj1} and \ref{conj2}. Suppose that $\mathfrak{q}$ is principal and that $3\nmid h_{K}h_{K(\zeta_3)}$. Let $A,B$ and $C$ be elements in $\mathcal{O}_K$ supported only on $\mathfrak{q}$. Let $S_K = \{\mathfrak{q}\}$ and suppose that every solution $(\alpha,\gamma)\in\mathcal{O}_{S_K}^\times \times\mathcal{O}_{S_K}$ to 
    \begin{equation}
        \alpha + 1 = \gamma^3 \label{alphaplusoneequalsgammacubed}
    \end{equation}
    with $v_\mathfrak{q}(\alpha)\geqslant 0$ satisfies $v_\mathfrak{q}(\alpha) \leqslant 3v_\mathfrak{q}(3)$. Then, there is a constant $V = V(K,A,B,C)$ such that the equation $Aa^p + Bb^p = Cc^3$ with exponent $p>V$ has no non-trivial
    solutions in $W_K$.
\end{theorem}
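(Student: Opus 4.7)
The strategy is to derive Theorem \ref{theorem: special case} as a direct consequence of Theorem \ref{main theorem}: once the standing modularity conjectures \ref{conj1} and \ref{conj2} are assumed, one only needs to check that the more restrictive hypotheses here imply the two structural conditions in the statement of the general theorem (triviality of $\textup{Cl}_{T_K}(K)[3]$ and the $T_K$-unit condition on $\alpha+\beta=\gamma^3$). The class-group step is immediate: since the only prime of $K$ above $3$ is $\mathfrak{q}$ and $A,B,C$ are supported only on $\mathfrak{q}$, we have $T_K=\{\mathfrak{q}\}$; the principality of $\mathfrak{q}$ then gives $\textup{Cl}_{T_K}(K)=\textup{Cl}(K)$, whose $3$-torsion vanishes by $3\nmid h_K$.

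The main work lies in reducing the general $T_K$-unit condition to the normal-form condition postulated in the statement. Fix a solution $(\alpha,\beta,\gamma)\in\mathcal{O}_{T_K}^\times\times\mathcal{O}_{T_K}^\times\times\mathcal{O}_{T_K}$ of $\alpha+\beta=\gamma^3$; since the only prime above $3$ is $\mathfrak{q}$, the desired conclusion is $|v_{\mathfrak{q}}(\alpha\beta^{-1})|\leqslant 3v_{\mathfrak{q}}(3)$. By the $\alpha\leftrightarrow\beta$ symmetry I may assume $v_{\mathfrak{q}}(\alpha)\geqslant v_{\mathfrak{q}}(\beta)$; the equal-valuation case is trivial. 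In the strict-inequality case, taking valuations in $\alpha+\beta=\gamma^3$ forces $v_{\mathfrak{q}}(\beta)=3v_{\mathfrak{q}}(\gamma)\equiv 0\pmod 3$, so writing $\pi$ for a generator of $\mathfrak{q}$ and $\beta=u\pi^{3m}$ with $u\in\mathcal{O}_K^\times$, the substitution $(\alpha,\gamma)\mapsto(\alpha\pi^{-3m},\gamma\pi^{-m})$ brings the equation into the form $\alpha_1+u=\gamma_1^3$ with $\alpha_1\in\mathcal{O}_{S_K}^\times$, $v_{\mathfrak{q}}(\alpha_1)\geqslant 0$, and $\gamma_1\in\mathcal{O}_{S_K}$.

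It remains to absorb the unit $u\in\mathcal{O}_K^\times$ so as to reach an equation $\alpha_2+1=\gamma_2^3$ to which the hypothesis of Theorem \ref{theorem: special case} applies. This is the step where $3\nmid h_{K(\zeta_3)}$ is used: passing to $L=K(\zeta_3)$ and applying Kummer theory, the vanishing of the $3$-part of $\textup{Cl}(L)$ together with $3\nmid h_K$ constrains the image of $\mathcal{O}_K^\times$ in $L^\times/(L^\times)^3$ tightly enough to write $u=w^3$ (possibly after absorbing a $\zeta_3$-twist into $\gamma_1$) with $w\in\mathcal{O}_K^\times$, yielding $\alpha_1 w^{-3}+1=(\gamma_1 w^{-1})^3$. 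Because $v_{\mathfrak{q}}(w)=0$, the bound $v_{\mathfrak{q}}(\alpha_1 w^{-3})\leqslant 3v_{\mathfrak{q}}(3)$ supplied by the hypothesis translates directly to the bound on $v_{\mathfrak{q}}(\alpha\beta^{-1})$ required by Theorem \ref{main theorem}. I expect the main obstacle to be precisely this Kummer-descent step: pinning down how the two class-number conditions combine to force $u$ to be removable by a cube change of variables that preserves the $S_K$-integrality of $\gamma_1$. Once that is in place, Theorem \ref{main theorem} applies verbatim to produce the constant $V=V(K,A,B,C)$.
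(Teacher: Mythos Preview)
Your overall strategy of deducing this from Theorem \ref{main theorem} matches the paper's, and your reduction to the normalised equation $\alpha_1+u=\gamma_1^3$ with $u\in\mathcal{O}_K^\times$ and $v_{\mathfrak{q}}(\alpha_1)\geqslant 0$ is essentially what the paper does. The gap is precisely where you flag it, and it is a real one: the hypotheses $3\nmid h_K$ and $3\nmid h_{K(\zeta_3)}$ do \emph{not} force every $u\in\mathcal{O}_K^\times$ to be a cube in $K$, nor a cube up to a $\zeta_3$-twist (and you cannot absorb $\zeta_3$ into $\gamma_1$ anyway unless $\zeta_3\in K$). Class field theory only tells you that $K(\zeta_3)$ admits no everywhere-unramified abelian extension of degree $3$; the Kummer extension $K(\zeta_3,\sqrt[3]{u})/K(\zeta_3)$ is automatically unramified away from $3$ since $u$ is a unit, but it can certainly ramify above $\mathfrak{q}$, so nothing prevents $u$ from being a non-cube.

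The paper does not try to make $u$ a cube. Instead it splits into two cases. If $u$ is already a cube in $K$, divide through and invoke the hypothesis on equation \eqref{alphaplusoneequalsgammacubed}. If $u$ is not a cube, argue by contradiction: assume $v_{\mathfrak{q}}(\alpha_1)>3v_{\mathfrak{q}}(3)$, and use exactly this inequality (so that $3^3\mid\alpha_1$) to show that $K(\zeta_3,\sqrt[3]{u})/K(\zeta_3)$ is unramified also above $\mathfrak{q}$---the paper does this by exhibiting an explicit element of $K(\zeta_3,\sqrt[3]{u})$ whose minimal polynomial over $K$ has $\mathfrak{q}$-unit discriminant. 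That produces an everywhere-unramified cubic extension of $K(\zeta_3)$, contradicting $3\nmid h_{K(\zeta_3)}$. In other words, the role of the class-number hypothesis on $K(\zeta_3)$ is not to kill $u$ a priori, but to rule out the bad valuation range when $u$ happens not to be a cube.
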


In particular, when we apply the above theorem to imaginary quadratic fields, we obtain the following result:

\begin{cor}\label{corollary: quadratic imaginary fields asymptotic solutions}
    Let $d\geqslant 2$ be a square-free integer such that $d\equiv 1\mod 3$ and let $K = \mathbb{Q}(\sqrt{-d})$. Let $A$, $B$ and $C$ be elements of $\mathcal{O}_K$ divisible only by the primes above $3$ in $K$. Suppose that $3\nmid h_Kh_{K(\zeta_3)}$ and that $K$ satisfies Conjectures \ref{conj1} and \ref{conj2}. Then, there is a constant $V = V(d,A,B,C)$ such that the equation $Aa^p + Bb^p = Cc^3$ has no non-trivial solutions in $W_K$ when $p>V$. 
\end{cor}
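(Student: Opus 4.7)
The plan is to verify the hypotheses of Theorem \ref{theorem: special case} for $K = \mathbb{Q}(\sqrt{-d})$ and then invoke it. The conditions $3 \nmid h_K h_{K(\zeta_3)}$, the modularity conjectures, and the requirement that $A, B, C$ be supported only on primes above $3$ are directly inherited from the hypotheses of the corollary. What remains is to pin down the structure of the prime(s) above $3$ in $K$ and to verify the $S$-unit condition on solutions of $\alpha + 1 = \gamma^3$.

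First I would show that $3$ is inert in $K$. Since $d$ is squarefree with $d \equiv 1 \pmod{3}$, the prime $3$ is unramified, and $-d \equiv 2 \pmod{3}$ is a non-square modulo $3$, so $3$ remains prime. Thus there is a unique prime $\mathfrak{q} = (3)$ above $3$; it is principal, and the set of primes above $3$ is exactly $\{\mathfrak{q}\}$, matching the support condition on $A, B, C$. Moreover $v_\mathfrak{q}(3) = 1$, so the $S$-unit hypothesis of Theorem \ref{theorem: special case} becomes $v_\mathfrak{q}(\alpha) \leq 3$.

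The main obstacle is verifying this $S$-unit condition. Since $d \geq 2$ and $d \equiv 1 \pmod{3}$ exclude the cases $d \in \{1, 3\}$ of extra units, we have $\mathcal{O}_K^\times = \{\pm 1\}$, and hence $\mathcal{O}_{S_K}^\times = \{\pm 3^n : n \in \mathbb{Z}\}$. Writing $\alpha = \epsilon \cdot 3^n$ with $\epsilon \in \{\pm 1\}$ and $n \geq 0$, comparing $\mathfrak{q}$-adic valuations in $\gamma^3 = \epsilon \cdot 3^n + 1$ forces $\gamma \in \mathcal{O}_K$. I would then factor the ideal equation $(\gamma \mp 1)(\gamma^2 \pm \gamma + 1) = \mathfrak{q}^n$, observe that each factor is a power of $\mathfrak{q}$, use the principality of $\mathfrak{q}$ together with $\mathcal{O}_K^\times = \{\pm 1\}$ to write $\gamma \mp 1 = u \cdot 3^a$ for some $u \in \{\pm 1\}$ and $a \geq 0$, and substitute into the quadratic factor. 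This reduces to an elementary equation in $\mathbb{Z}$, and a short case analysis (handling $a = 0$, $a = 1$, and $a \geq 2$ separately, the last by reducing modulo $3$) shows that the only solutions are $(\alpha, \gamma) = (-1, 0)$ and $(\alpha, \gamma) = (-9, -2)$, both of which satisfy $v_\mathfrak{q}(\alpha) \leq 2$.

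With all hypotheses of Theorem \ref{theorem: special case} verified, the theorem supplies the constant $V = V(d, A, B, C)$ and the corollary follows.
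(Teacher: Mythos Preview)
Your proposal is correct and follows the same overall architecture as the paper: verify the hypotheses of Theorem~\ref{theorem: special case} and invoke it. The difference lies in how you handle the $S$-unit equation $\alpha+1=\gamma^3$.

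The paper packages this step as Proposition~\ref{proposition: S-unit equation for quadratic imaginary}, whose proof passes to $L=K(\zeta_3)$, factors $\gamma^3-1=(\gamma-1)(\gamma-\zeta_3)(\gamma-\zeta_3^2)$ there, and then uses a valuation-and-norm argument (comparing $\mathcal{O}_L^\times$, $\mathcal{O}_F^\times$ for the real subfield $F$, and norms to $\mathbb{Q}(\zeta_3)$) to force $n\le 2$. Your route is more elementary: since $3$ is inert you factor $(\gamma-1)(\gamma^2+\gamma+1)=\pm 3^n$ directly in $\mathcal{O}_K$, observe that each factor generates a power of the prime $(3)$, and use $\mathcal{O}_K^\times=\{\pm1\}$ to get $\gamma-1=\pm 3^a\in\mathbb{Z}$, reducing everything to a rational-integer computation. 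This is shorter and avoids the auxiliary field $K(\zeta_3)$ entirely; the paper's argument, on the other hand, also disposes of the case $v_\mathfrak{q}(\alpha)<0$ (not needed for Theorem~\ref{theorem: special case}) and is closer in spirit to the unramified-extension trick used in Case~2 of the proof of Theorem~\ref{theorem: special case}.
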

	For explicit results, we restrict ourselves to the case $A=C=1$ and $B=d$ when working over the fields $K=\Q(\sqrt{-d}).$ There are of course many other equations for which the methods may work but we have chosen these cases to illustrate the method and the difficulties that may arise.

     \begin{theorem}\label{thm:sample}
         Let $K=\Q(\sqrt{-d})$ where $d \in \{7,19,43,67\}$.  Assume that Conjecture~\ref{conj1} holds for $K$. Note that $3$ is inert in all of these number fields and let $\lambda$ be the unique prime of $\calO_K$ lying over $3$. Then for any prime $p>B_K$, the equation
          \begin{equation}\label{maineqn1}
	  a^p+d b^p=c^3
	  \end{equation} does not have any non-trivial solutions $(a,b,c)\in\calO_{K}^3$ such that $(a,db,c)$ is primitive and $\lambda \mid b$ where $B_K$ is as follows:
      \begin{itemize}
          \item When $d=7,19$, $B_K=20.$
          \item When $d=43$, $B_K= 2531.$
          \item When $d=67$, $B_K= 86338229.$
      \end{itemize}
         \end{theorem}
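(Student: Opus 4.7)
The plan is to execute the modular method via a Frey elliptic curve, but with an explicit bookkeeping of the finite list of candidate newforms surviving level lowering, so that the bound $B_K$ can be extracted rather than just asserted to exist.

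Given a hypothetical non-trivial primitive solution $(a,b,c)\in\calO_K$ to $a^p+db^p=c^3$ with $\lambda\mid b$, I attach a Frey elliptic curve $E=E_{(a,b,c)}/K$ following the construction used in \cite{IKO2}. For each of the four values of $d$ the field $K=\Q(\sqrt{-d})$ has class number one and $3$ is inert (since $-d\equiv 2\pmod 3$), so the conductor computation proceeds without the obstructions arising in the general case. Using primitivity together with the hypothesis $\lambda\mid b$, the conductor $\mathfrak{N}_E$ factors as $\mathfrak{N}_0\cdot\prod_{\frq\mid abc,\,\frq\nmid 6}\frq$, where $\mathfrak{N}_0$ is supported on the primes above $2$ and $\lambda$ and is determined explicitly from the reduction types of $E$ at those primes (the local contribution at $\lambda$ being pinned down by $\lambda\mid b$).

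Assuming Conjecture~\ref{conj1} and irreducibility of $\bar\rho_{E,p}$ (which holds for $p$ beyond an effective Merel-type bound in each case), $E$ is modular, and the hands-on level lowering argument of \cite[\S 3]{IKO2} — whose input is modularity of elliptic curves alone, which is why Conjecture~\ref{conj2} is not needed here — removes from $\mathfrak{N}_E$ every prime dividing $abc$ that lies outside $\mathfrak{N}_0$. Hence $\bar\rho_{E,p}\simeq\bar\rho_{\mathfrak g,\mathfrak P}$ for some Bianchi newform $\mathfrak g$ of weight $2$ and level $\mathfrak{N}_0$ and some prime $\mathfrak P\mid p$ in the Hecke coefficient ring of $\mathfrak g$. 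For each $d\in\{7,19,43,67\}$ I compute $\mathfrak{N}_0$ explicitly and enumerate the finite set of Bianchi newforms at that level using Bianchi modular symbols (e.g.\ via Magma's implementation). For each candidate $\mathfrak g$, I select auxiliary primes $\frq\nmid\mathfrak{N}_0p$ of small norm and exploit the congruence
\[
a_{\frq}(E)\equiv a_{\frq}(\mathfrak g)\pmod{\mathfrak P}.
\]
Because the residue $(a,b,c)\bmod\frq$ takes only finitely many values, $a_{\frq}(E)$ ranges over a computable finite set; each value forces $p$ to divide the norm down to $\Z$ of a specific nonzero element $a_{\frq}(E)-a_{\frq}(\mathfrak g)$. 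Intersecting these divisibility conditions as $\mathfrak g$ and $\frq$ vary yields the bound $p\leqslant B_K$.

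The main obstacle is controlling the size of $B_K$ in the last two cases. For $d=7$ and $d=19$ the level $\mathfrak{N}_0$ supports only a handful of newforms, all with rational Hecke eigenvalues, and comparing traces at a single small prime already collapses $p$ to single digits. For $d=43$ and especially $d=67$, however, the newform space at level $\mathfrak{N}_0$ is substantially richer and contains forms whose Hecke eigenvalue field is a non-trivial extension of $\Q$; the integer $\Norm_{K_{\mathfrak g}/\Q}\!\bigl(a_{\frq}(E)-a_{\frq}(\mathfrak g)\bigr)$ then acquires large prime factors, producing the bounds $2531$ and $86338229$. The bulk of the work therefore lies in identifying those newforms, ruling out (for instance through a direct search in the relevant isogeny classes) the existence of an elliptic curve with matching Frey-type local invariants that would obstruct the method entirely, and choosing auxiliary primes $\frq$ so as to minimise the resulting norms.
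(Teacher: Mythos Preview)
Your overall shape is right (Frey curve, absolute irreducibility, Serre-type level lowering via Conjecture~\ref{conj1}, then eliminate a finite list of Bianchi newforms by comparing Hecke eigenvalues at small primes), but two points are genuinely wrong and would prevent you from recovering the stated bounds.

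First, the Serre level is not supported on the primes above $2$ and $\lambda$. With $A=C=1$ and $B=d$, Lemma~\ref{semist} gives that the Serre conductor $\mathfrak N_E$ is supported on $\lambda$ and on the unique prime $\mathfrak D$ of $K$ above $d$; concretely $\mathfrak N_E\in\{\mathfrak D,\lambda\mathfrak D\}$ under the hypothesis $\lambda\mid b$. If you compute Bianchi newforms at a level supported at $2$ and $\lambda$ you will be looking at the wrong spaces entirely.

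Second, and more seriously, you have misattributed the source of the large constants for $d=43$ and $d=67$. In the paper the norm-of-eigenvalue-difference computation (your congruence $a_{\mathfrak q}(E)\equiv a_{\mathfrak q}(\mathfrak g)\pmod{\mathfrak P}$, formalised as Lemma~\ref{ideal_Bf}) only produces primes up to $11$ for $d=43$ and up to $17$ for $d=67$. The numbers $2531$ and $86338229$ come from a step your proposal omits altogether: Conjecture~\ref{conj1} only yields a \emph{mod $p$} eigenform, and to compare with the finite list of \emph{complex} Bianchi newforms one must lift, which by Proposition~\ref{lifting of forms} is guaranteed only once $p$ exceeds every prime occurring in the torsion of $\Gamma_0(\mathfrak N_E)^{\mathrm{ab}}$. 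Computing these abelianisations (via \c{S}eng\"un's algorithm) is what injects the primes $2531$ and $86338229$ into the bound. Without this lifting step you cannot pass from the mod $p$ eigenform supplied by Conjecture~\ref{conj1} to an honest Bianchi newform, and your elimination argument has nothing to act on. Relatedly, the obstruction you anticipate (a newform that cannot be killed by eigenvalue comparison and must be handled by an inertia/reduction-type argument) does occur, but for $d=19$, not for $d=43$ or $d=67$.
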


       In some cases, it may be possible to get rid of the condition on the type of the solutions as in the following example.
 \begin{remark}\label{rk:sample2} In some cases, we were able to get a more general result. For instance,
         let $K=\Q(\sqrt{-43})$ and  assume that Conjecture~\ref{conj1} holds for $K$. Let $\lambda$ be the prime of $\calO_K$ lying over $3$. In this case, we are able to prove that if $p> B_K=44483={\rm max}\{\ell_K,C_K,M_K\}$, $p$ splits in $K$ and $p \equiv 3 \pmod 4$, then $a^p+43b^p=c^3$
	   does not have any non-trivial solutions $(a,b,c)\in\calO_{K}^3$ such that $(a,db,c)$ is primitive. Here, the notations $\ell_K,C_K,M_K$ are analogues of the constants in \cite{IKO2} Proposition 3.9 and Corollay 4.10. Analogous results hold in this case as well; however, since the proofs would require considerable length, we omit them here and record the results merely as a remark rather than as a theorem. Note that, this result requires elimination of newforms at a much higher level. This kind of elimination was unfortunately not possible for other fields.
         \end{remark}

     \subsection*{Aknowledgements} 
    We would like to thank Begum Gulsah Cakti for very helpful conversations. The first author is supported by the Turkish National and Scientific Research Council
(TÜBİTAK) Research Grant 122F413.

	  	\section{Preliminaries}
	
	 In this section, we give the necessary background to prove the results. We follow  \cite{SS}, \cite{FS} and the references therein.
	 \subsection{Conjectures}\label{Conjectures}
	 
	 In order to run the modular approach to solve Diophantine equations, one needs generalized modularity theorems.
	 Due to the lack of their existence, we can only prove our theorems up to some conjectures. These conjectures are related to eigenforms over a number field $K$ and mod $p$ eigenforms over $K$. A brief recap of these concepts is given in Sections 2 and 3 of \cite{SS}. 
	 
	 \begin{conj}[\cite{FKS}, Conjecture 4.1]\label{conj1}  Let $\overline{\rho}:G_K\rightarrow GL_2(\overline{\mathbb{F}}_p)$ be an odd, 
	 	irreducible, continuous representation with 
	 	Serre conductor $\frakN$ (prime-to-p part of its Artin conductor) 
	 	and such that $\det(\overline{\rho})=\chi_p$ is the mod p cyclotomic character.   
	 	Assume that $p$ is unramified in $K$ and that $\overline{\rho}|_{G_{K_{\frakp}}}$ arises from
	 	a finite-flat group scheme over $\calO_{K_{\frakp}}$ for every prime $\frakp\mid p$.  Then there is a weight two, mod $p$
	 	eigenform $\theta$ over $K$ of level $\frakN$ such that for all primes $\frakq$ coprime to $p\frakN$, we have
	 	\[
	 	\text{Tr}(\overline{\rho}(\Frob_{\frakq}))=\theta(T_{\frakq}),
	 	\]
	 	where $T_{\frakq}$ denotes the Hecke operator at $\frakq$.
	 \end{conj}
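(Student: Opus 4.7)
The statement is a Serre-type modularity conjecture over a general number field $K$, and a genuine proof is of course out of reach with current technology; what I can propose is the standard strategy that has succeeded in the cases where analogous results are known (classically over $\Q$ by Khare--Wintenberger, and in partial form over totally real and CM fields), and to indicate where the obstacles lie. The overall plan is to lift $\bar\rho$ to a $p$-adic representation, prove that lift is automorphic by potential automorphy plus solvable descent, and then reduce modulo $p$ to extract the required mod $p$ eigenform $\theta$ of level $\frakN$.

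First I would produce a geometric characteristic zero lift $\rho:G_K \to GL_2(\overline{\Z}_p)$ of $\bar\rho$ with determinant the $p$-adic cyclotomic character and with the prescribed local inertial type away from $p$ (so that the Artin conductor of $\rho$ equals $\frakN$) and with $\rho|_{G_{K_{\frakp}}}$ crystalline of Hodge--Tate weights $(0,1)$ at every $\frakp\mid p$; this weight choice is forced by the weight two, finite--flat hypothesis, and existence of such a lift can be arranged via a Ramakrishna-style deformation theoretic argument using the assumption that $p$ is unramified in $K$ and that $\bar\rho$ is finite flat at each $\frakp\mid p$, so the local deformation rings at $p$ are formally smooth of the expected dimension. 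Next I would invoke a potential automorphy theorem of Taylor--Harris--Shepherd-Barron--Taylor / Barnet-Lamb--Gee--Geraghty type, together with recent advances in the Calegari--Geraghty framework, to produce a finite solvable CM extension $L/K$ over which $\rho|_{G_L}$ corresponds to a regular algebraic cuspidal automorphic representation $\pi_L$ of $GL_2(\mathbb{A}_L)$; then solvable base change (Langlands) descends $\pi_L$ to an automorphic representation $\pi$ on $GL_2(\mathbb{A}_K)$ whose Galois representation is $\rho$.

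Once $\rho$ is known to be automorphic, a modularity lifting theorem (an ``R=T'' statement in the style of Taylor--Wiles--Kisin, extended to general number fields) will identify $\pi$ with a cuspidal automorphic form on $GL_2/K$ of parallel weight $2$ and level $\frakN$. Reducing the associated Hecke eigensystem modulo a prime of $\overline{\Z}_p$ above $p$ produces a weight two mod $p$ eigenform $\theta$ over $K$ of level $\frakN$ whose Hecke eigenvalues at primes $\frakq \nmid p\frakN$ are $\Tr(\bar\rho(\Frob_\frakq))$, exactly as demanded by the conjecture; the matching of levels follows because the inertial type was built into $\rho$ from the start, and the matching of weights follows from the Hodge--Tate weights chosen at primes above $p$.

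The main obstacle is unambiguous: over a general number field $K$ that is neither totally real nor CM, none of the three key inputs above is currently available in sufficient strength. Potential automorphy requires $L$ to be a setting where trace formula techniques and the construction of Galois representations attached to automorphic forms are known (so essentially CM fields), and modularity lifting requires a sufficiently rich Hecke algebra acting on cohomology whose properties (base change compatibility, local--global compatibility, $R=T$) one can actually prove; in the non-self-dual setting on $GL_2$ over a general number field the construction of a map from a deformation ring $R$ to a completed Hecke algebra $\mathbb{T}$ is understood (after Scholze and others), but the injectivity/surjectivity needed for the lifting argument remains conjectural and is precisely the hardest step. It is for this reason that we, like the earlier literature, are forced to assume Conjecture \ref{conj1} rather than prove it.
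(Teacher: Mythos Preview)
Your proposal correctly identifies that this statement is a conjecture, not a theorem, and the paper does not attempt to prove it; it is simply assumed as a standing hypothesis throughout. There is therefore no ``paper's own proof'' to compare against, and your closing sentence---that one is forced to assume Conjecture~\ref{conj1} rather than prove it---matches exactly how the paper treats the statement.

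Your sketch of the Khare--Wintenberger strategy (geometric lifting, potential automorphy over a solvable CM extension, solvable descent, $R=\mathbb{T}$) is a reasonable account of what a proof would have to look like and where it breaks down for general $K$. One minor caution: the solvable base change step as you describe it is optimistic even in principle, since descending cuspidal automorphic representations along solvable extensions of general number fields is not known in the generality you need, and for $K$ neither totally real nor CM the very existence of Galois representations attached to cohomological automorphic forms on $\GL_2/K$ is itself open. But since you already flag these obstructions and explicitly decline to claim a proof, there is no gap to correct.
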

 
 The above conjecture follows from Serre's Modularity Conjecture. Additionally, we will use a special case of a fundamental conjecture from Langlands Programme for the asymptotic result.  Note that we don't need Conjecture \ref{conj2} for Theorem \ref{thm:sample}.
 
 \begin{conj}[\cite{SS}, Conjecture 4.1]\label{conj2}
 	Let $\frakf$ be a weight two complex eigenform over $K$ of level $\frakN$ that is non-trivial and new.  If $K$ has some
 	real place, then there exists an elliptic curve $E_{\frakf}/K$ of conductor $\frakN$ such that 
 	\begin{equation}\label{c2eqn}
 	\#E_{\frakf}(\calO_K/ \frakq)=1+\nr(\frakq)-\frakf(T_{\frakq})\quad\mbox{for all}\quad\frakq\;\nmid\;\frakN.
 	\end{equation}
 	If $K$ is totally complex, then there exists either an elliptic curve $E_{\frakf}$ of conductor $\frakN$ satisfying (\ref{c2eqn})
 	or a fake elliptic curve 
 	$A_{\frakf}/K$, of conductor $\frakN^2$, such that
 	\begin{equation}
 	\#A_{\frakf}(\calO_K/\frakq)=(1+\nr(\frakq)-\frakf(T_{\frakq}))^2\quad\mbox{for all}\quad\frakq\;\nmid\;\frakN.
 	\end{equation}
 \end{conj}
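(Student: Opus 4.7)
The plan is to approach Conjecture \ref{conj2} through the Langlands correspondence in two movements: first attach a compatible system of $\ell$-adic Galois representations to the eigenform $\frakf$, then realize that system geometrically as the $\ell$-adic Tate module of an abelian variety of the prescribed type.

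For the first step, I would construct, for each prime $\ell$, a continuous semisimple representation $\rho_{\frakf,\ell}\colon G_K\to \GL_2(\overline{\Q}_\ell)$ unramified outside $\frakN\ell$ and satisfying $\tr \rho_{\frakf,\ell}(\Frob_\frakq) = \frakf(T_\frakq)$ and $\det \rho_{\frakf,\ell}(\Frob_\frakq) = \nr(\frakq)$ for every $\frakq \nmid \frakN\ell$. When $K$ is totally real this is classical: pass to a quaternion algebra $D/K$ ramified at all but one infinite place and extract $\rho_{\frakf,\ell}$ from the \'etale cohomology of the associated Shimura curve, in the tradition of Eichler-Shimura-Deligne, Carayol, Taylor, and Blasius-Rogawski. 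When no such Shimura curve is available, the natural cohomological home for $\frakf$ is the locally symmetric space $Y(K_f)$ attached to $\GL_2/K$; here the construction of $\rho_{\frakf,\ell}$ would rely on the completed/torsion-cohomology machinery of Scholze and Harris-Lan-Taylor-Thorne, realized as a $p$-adic limit of congruences with automorphic forms on unitary groups where Shimura varieties do exist.

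For the second step, the weight-$2$ hypothesis forces the Hodge-Tate weights of $\rho_{\frakf,\ell}|_{G_{K_\frakp}}$ to be $\{0,1\}$ at every $\frakp \mid \ell$, and the determinant is cyclotomic; together these are exactly the Hodge-Tate data of $H^1$ of an abelian variety. I would then invoke the Fontaine-Mazur conjecture in the cases accessible through modularity lifting (Kisin, Taylor, Calegari-Geraghty, and for imaginary quadratic $K$ the recent potential-automorphy work of Allen-Calegari-Caraiani-Gee-Helm-Le Hung-Newton-Scholze-Taylor-Thorne) to realize the motive of $\rho_{\frakf,\ell}$ as a rank-$2$ factor of $H^1(A)$ for an abelian variety $A/K$ of $\GL_2$-type, determined up to isogeny by Faltings' isogeny theorem. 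If the Hecke eigenvalues of $\frakf$ all lie in $\Z$ then $A$ is an elliptic curve and \eqref{c2eqn} follows from the Eichler-Shimura congruence relation; if instead $K$ is totally complex and these eigenvalues span a real quadratic order, then $A$ is an abelian surface with quaternionic multiplication by that order, i.e.\ a fake elliptic curve of conductor $\frakN^2$.

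The hard part -- and precisely why the statement remains a conjecture -- is the second step whenever $K$ is not totally real. Attaching a compatible system of Galois representations is by now available in broad generality, but proving that such a system is geometric (arises from an abelian variety) is exactly the Langlands-Fontaine-Mazur conjecture, which is open outside restricted settings. Distinguishing a genuine elliptic curve from a fake one in the totally complex case adds a further subtlety: one must descend $\rho_{\frakf,\ell}$ from $\GL_2(\overline{\Q}_\ell)$ to $\GL_2(\Z_\ell)$ when the field of Hecke eigenvalues is $\Q$, and otherwise construct the quaternionic abelian surface directly -- a construction for which no uniform recipe is presently available.
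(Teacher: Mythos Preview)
The statement you are addressing is Conjecture~\ref{conj2}, not a theorem: the paper does not prove it, nor does it claim to. It is recorded as a standing hypothesis (a special case of the Langlands programme, quoted from \cite{SS}) which is \emph{assumed} in the proofs of Theorems~\ref{main theorem} and~\ref{theorem: special case}. So there is no ``paper's own proof'' to compare against.

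Your proposal is not a proof either, and you say as much in your final paragraph: the first step (attaching a compatible system $\rho_{\frakf,\ell}$) is indeed largely available by the work you cite, but the second step is exactly the open Fontaine--Mazur/Langlands problem for $\GL_2$ over general number fields. Invoking ``the Fontaine--Mazur conjecture in the cases accessible through modularity lifting'' does not close the gap, since those cases do not cover arbitrary weight-$2$ eigenforms over arbitrary $K$; in particular, over a general totally complex $K$ there is at present no unconditional way to produce the abelian variety $A$. Your write-up is a fair survey of the landscape, but it should be labeled as context for why the conjecture is plausible, not as a proof attempt.
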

 
  Given a number field $K$, we obtain a \emph{complex conjugation} for every real embedding 
  $\sigma: K \hookrightarrow \mathbb R$ and every extension $\tilde{\sigma}: \overline{K} \hookrightarrow \mathbb C$ of 
  $\sigma$ as $\tilde{\sigma}^{-1}\iota \tilde{\sigma} \in G_K$ where $\iota$ is the usual complex conjugation.
  Recall that a 
  representation $\overline{\rho}_{E,p}: G_K \rightarrow \GL_2(\overline{\mathbb F}_p)$ is \emph{odd} if the determinant of every complex 
  conjugation is $-1$.  If the number 
  field $K$ has no real embeddings, then we immediately say that $\overline{\rho}_{E,p}$ is odd. 
  
	 \subsection{Frey curve and related facts}
	 
	 In this section, we collect some facts related to the Frey curve associated to a putative solution of Equation 
	 \ref{maineqn} and the associated Galois representation.
	 
	 Let $G_{K} $ be the absolute Galois group of a number field $K$, let $E/K$ be an elliptic curve and $ \overline{\rho}_{E,p}$ denote the mod $p$ Galois representation of $E$.  We use $\frakq$ for an arbitrary prime of $K$, and $G_\frakq$ and $I_\frakq$ respectively for the decomposition and inertia subgroups of $G_K$ at $\frakq$. 
	 
	  For a putative solution $(a,b,c)$ to the equation $Aa^p+Bb^p=Cc^3$ with a prime exponent $p$, we associate the Frey elliptic curve,
	 \begin{equation} \label{the frey curve}  
	 E=E_{a,b,c}: Y^2+3CcXY+C^2Bb^pY=X^3
	 \end{equation}
	 whose arithmetic invariants are given by $$\Delta_E=3^3AB^3C^8(ab^3)^p, \; \displaystyle j_E=\frac{3^3Cc^3(9Aa^p+Bb^p)^3}{AB^3(ab^3)^p}$$ and $$c_4(E)=3^2C^3c(9Aa^p+Bb^p),\; c_6(E)=-3^3C^4(3^3C^2c^6-2^23^2Cc^3Bb^p+2^3B^2b^{2p}).$$

	   Notice that the Frey curve $E$ has a $K$-rational point of order $3$ which is $(0,0)$.  Moreover, the determinant of $\overline{\rho}_{E,p}$ is the mod $p$ cyclotomic character which is  a well known consequence of the Weil pairing attached to elliptic curves.  This immediately implies oddness of $\overline{\rho}_{E,p}$ when we take $p$ large enough such that no $p^\text{th}$ root of unity is contained in $K$.

  In the next proposition, we discuss the semistability of the Frey curve $E$. 
\begin{proposition}\label{Conductor of frey}
    Let $E$ be the Frey curve defined in \eqref{the frey curve}. Then:
    \begin{enumerate}
    \item The Frey curve $E$ is semistable outside of $T_K$. 
    \item The Serre conductor $\mathfrak{N}_E$, which is the prime-to-$p$ part of the Artin conductor of $\overline{\rho}_{E,p}$,  is supported on the primes in $T_K$ and there is a finite amount of possible values for $\mathfrak{N}_E$. 
    \item For large enough $p$ (depending only on $A$,$B$,$C$ and $K$), the Galois representation $\overline{\rho}_{E,p}$ is finite flat at every prime $\mathfrak{p}$ of $K$ that lies above $p$.
    \end{enumerate}
\end{proposition}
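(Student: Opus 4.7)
The plan is to establish the five claims in the order stated, using the explicit Weierstrass invariants of the Frey curve together with standard local theory. For the $K$-rational $3$-torsion point, observe that the given equation has $a_2 = a_4 = a_6 = 0$, so $P_0 := (0,0) \in E(K)$. The duplication formula gives slope $\lambda = 0$ at $P_0$, hence $x([2]P_0) = 0$; the $y$-coordinate computation then yields $[2]P_0 = -P_0$, so $[3]P_0 = O$. The point $P_0$ has order exactly $3$ because $a_3 = C^2 B b^p \neq 0$ (since $(a,b,c) \in W_K$ is non-trivial and $B \neq 0$).

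Next, the discriminant $\Delta_E = 3^3 A B^3 C^8 (ab^3)^p$ shows that any prime $\mathfrak{q} \notin T_K$ dividing $\Delta_E$ must divide $ab$. Primitivity of $(a,b,c)$ together with $\mathfrak{q} \nmid ABC$ forces $\mathfrak{q}$ to divide exactly one of $a$ or $b$ and to be coprime to $c$; a short case analysis then yields $v_\mathfrak{q}(9Aa^p + Bb^p) = 0$, hence $v_\mathfrak{q}(c_4) = 0$, and Tate's algorithm produces multiplicative (hence semistable) reduction at $\mathfrak{q}$. For the Serre conductor, at such a $\mathfrak{q}$ not lying above $p$, either the reduction is good (so $\overline{\rho}_{E,p}|_{I_\mathfrak{q}}$ is unramified) or it is multiplicative with $v_\mathfrak{q}(\Delta_E^{\min}) = p \cdot v_\mathfrak{q}(ab^3)$ divisible by $p$; Tate-curve uniformisation then gives $\overline{\rho}_{E,p}|_{I_\mathfrak{q}} = \id$, so again unramified. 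Hence $\mathfrak{N}_E$ is supported in $T_K$, and finiteness of its possible values follows from $|T_K| < \infty$ together with the standard uniform bound on the conductor exponent $f_\mathfrak{q}(E)$ in terms of the residue characteristic and absolute ramification index at $\mathfrak{q}$.

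For the final assertion, choose $p$ larger than every $v_\mathfrak{L}(3ABC)$, so that no $\mathfrak{p} \mid p$ lies in $T_K$; then $E$ is semistable at every $\mathfrak{p} \mid p$ by the previous step. In the good-reduction case, $E[p]$ extends to a finite flat group scheme over $\calO_{K,\mathfrak{p}}$ via the N\'eron model; in the multiplicative case, the divisibility $p \mid v_\mathfrak{p}(\Delta_E^{\min})$ means the Tate parameter admits a $p$-th root, again yielding finite-flatness at $\mathfrak{p}$. The determinant equality $\det(\overline{\rho}_{E,p}) = \chi_p$ follows from the Weil pairing on $E[p]$. The main obstacle will be the uniform treatment at primes of $T_K$ above $2$ or $3$, where wild inertia can act non-trivially on $E[p]$ and the Kodaira type varies with the particular primitive solution $(a,b,c)$; rather than running Tate's algorithm case by case, one quotes a local bound on $f_\mathfrak{q}(E)$ (e.g.\ of the form $2 + 6 e_\mathfrak{q}$ above $3$) depending only on $K$, which suffices to conclude finiteness of $\mathfrak{N}_E$.
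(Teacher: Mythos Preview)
Your proof is correct and follows essentially the same approach as the paper: the $3$-torsion point is $(0,0)$, semistability outside $T_K$ comes from $v_\mathfrak{q}(c_4)=0$, the Serre-conductor support from $p\mid v_\mathfrak{q}(\Delta_E)$ at multiplicative primes, finiteness from the standard local conductor bound, and the remaining claims from Serre's finite-flat criterion and the Weil pairing. One small slip in phrasing: ``$p$ larger than every $v_\mathfrak{L}(3ABC)$'' does not by itself guarantee that no $\mathfrak{p}\mid p$ lies in $T_K$---you want $p$ coprime to $N_{K/\mathbb{Q}}(3ABC)$ (e.g.\ $p$ larger than this norm)---but this is cosmetic and does not affect the argument.
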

\begin{proof}
  Let $\mathfrak{q}$ be a prime in $K$ with $\mathfrak{q}\not\in T_K$. If $\mathfrak{q}\nmid\Delta$, then $E$ has good reduction at $\mathfrak{q}$ \cite[Proposition VII.5.1]{Silverman86}. If $\mathfrak{q}\mid\Delta$, then $\mathfrak{q}\mid ab$ and since $(a,b,c)$ is primitive, it follows that $\mathfrak{q}$ can only divide one of $a$ or $b$. Indeed, if $\mathfrak{q}\mid a$ and $\mathfrak{q} \mid b$ then $\mathfrak{q}\mid Cc^3 = Aa^p + Bb^p$. Since $\mathfrak{q}\nmid C$ (as $\mathfrak{q}\not\in T_K$) it follows that $\mathfrak{q}\mid c$, a contradiction with primitivity.
    
    Since $\mathfrak{q}\not\in T_K$, it follows that $\mathfrak{q}\nmid 3^2C^3c(3^2Aa^p + Bb^p) = c_4$. Indeed, since $\mathfrak{q}$ can only divide one of $a$ or $b$ it follows that it cannot divide $c$ due to $(a,b,c)$ being primitive. Further, $3^2Aa^p + Bb^p$ is congruent to either $3^2Aa^p$ or $Bb^p$ if $\mathfrak{q}\mid b$ or $\mathfrak{q}\mid a$, respectively. In either case, we have $\mathfrak{q}\nmid 3^2Aa^p+Bb^p$. Combining these facts gives $\mathfrak{q}\nmid c_4$. It follows that $E$ is minimal at $\mathfrak{q}$ and $E$ has multiplicative reduction by \cite[Proposition VII.5.1]{Silverman86}.
    For the assertion on the Serre conductor $\mathfrak{N}_E$, note that apriori the Serre conductor is supported on the prime divisors of $3ABCab$. If $\mathfrak{q}\mid ab$, we have  $p\mid pv_\mathfrak{q}(ab^3) = v_\mathfrak{q}(\Delta)$.  It then follows from \cite{ser87} that $\overline{\rho}_{E,p}$ is unramified at $\mathfrak{q}$ and hence $\mathfrak{q}\nmid\mathfrak{N}_E$. Thus, for all primes $\mathfrak{q}\not\in T_K$, we have  $\mathfrak{q}\nmid\mathfrak{N}_E$. In other words, the Serre conductor $\mathfrak{N}_E$ is supported on the primes in $T_K$. Let $\mathcal{N}_E$ denote the conductor of $E$, then from \cite[Theorem IV.10.4]{advanced_silverman} it follows that for primes $\mathfrak{P}\in T_K$, $$v_\mathfrak{P}(\mathfrak{N}_E) \leqslant v_\mathfrak{P}(\mathcal{N}_E) \leqslant 2 + 3v_\mathfrak{P}(3) + 6v_\mathfrak{P}(2).$$ Hence, the Serre conductor $\mathfrak{N}_E$ can only be one of finitely many values. 
    Finally, let $\frakp$ be a prime of $K$ lying above $p>3$. If we assume $p \nmid ABC $, we see that $p\mid v_\frakp(\Delta_E)$. It then follows from \cite{ser87} that $\overline{\rho}_{E,p}$ is finite-flat at $\frakp$. 
\end{proof}

For the proof of  Theorem \ref{thm:sample}, we need to calculate the conductor of the Frey curve given in \eqref{the frey curve} explicitly when $C=1$ for the complex quadratic fields stated in the theorem.
	 In this case, the Frey elliptic curve becomes
	 \[
	 E=E_{a,b,c}: Y^2+3cXY+Bb^pY=X^3
	 \]
	 with the arithmetic invariants
     $$\Delta_E=3^3AB^3(ab^3)^p, \; \displaystyle j_E=\frac{3^3c^3(9Aa^p+Bb^p)^3}{AB^3(ab^3)^p}$$ and $$c_4(E)=3^2c(9Aa^p+Bb^p),\; c_6(E)=-3^3(3^3c^6-2^23^2c^3Bb^p+2^3B^2b^{2p}).$$	
	
    The following result is essential for the explicit computations in the proof of Theorem \ref{thm:sample}.  We only need this result for the number fields $K$ stated in Theorem \ref{thm:sample} but in fact the result holds for any  number field satisfying the assumptions.  Below, $K$ is a number field such that there is a unique prime $\lambda$ over $3$. Also, we assume that $(Aa,Bb,c)$ is primitive where $(a,b,c)\in \calO_K^{3}$ is a putative solution of the above curve.

	  \begin{lemma}\label{semist} 
	 	Let $\lambda^e=3\calO_K $ where $\calO_K$ is the integer ring of $K$.
	 	The Frey curve $E$ is semistable away from $\lambda$ . 
	 	Moreover, the conductor $\mathcal{N}_E$ attached to the Frey curve $E$ is given by
	 	$$\mathcal{N}_E=\lambda^\epsilon\prod_{\mathfrak{q}\mid ABab, \mathfrak{q} \nmid 3}\mathfrak{q},$$ 
	 	where 
	 	\begin{enumerate}
	 		\item $\epsilon=\{0,1\}$ if $\lambda \mid Bb$ and $v_\lambda(Bb^p)\ge 3e$.
	 		\item $\epsilon=\{2,3\}$ if $\lambda \nmid AaBb$ and $e=1$.
	 		\end{enumerate}

	 	The Serre conductor $\frakN_E$ is supported on
	 	$\lambda AB$ and belongs to a finite set depending only on the
	 	field $K$.  
	 \end{lemma}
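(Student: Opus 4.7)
The plan is to apply Tate's algorithm to $E$ at each prime of $\mathcal{O}_K$, exploiting the explicit formulas for $\Delta_E$, $c_4$, $c_6$ together with the primitivity of $(a,b,c)$. First, direct substitution shows that $(0,0)$ lies on $E$, and since the right-hand side of \eqref{Frey} is $X^3$, the tangent line to $E$ at $(0,0)$ is $Y=0$, which meets $E$ with multiplicity three at the origin; hence $(0,0) \in E(K)$ has order $3$. The statement that $\det(\overline{\rho}_{E,p})$ is the mod-$p$ cyclotomic character is then immediate from the Weil pairing on $E[p]$.

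Next I would treat primes $\mathfrak{q} \nmid 3$. If $\mathfrak{q} \nmid ABab$, then $\mathfrak{q} \nmid \Delta_E = 3^3 AB^3(ab^3)^p$, so $E$ has good reduction and contributes nothing to $\mathcal{N}_E$. If $\mathfrak{q} \mid ABab$ but $\mathfrak{q} \nmid 3$, I would combine primitivity of $(a,b,c)$ with the relation $Aa^p + Bb^p = c^3$ to conclude that $\mathfrak{q} \nmid c(9Aa^p + Bb^p)$, so that $v_\mathfrak{q}(c_4) = 0$; the given model is then minimal at $\mathfrak{q}$, and $E$ has multiplicative reduction with conductor exponent $1$. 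This simultaneously gives semistability away from $\lambda$ and the product factor in the stated formula for $\mathcal{N}_E$.

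For the prime $\lambda$ above $3$, I would run Tate's algorithm in the two listed cases separately. In case (1), the hypothesis $v_\lambda(Bb^p) > 2e$ allows (possibly after translating $Y$) a transformation that brings the model into a form which Tate's algorithm recognises as having good or multiplicative reduction at $\lambda$, giving $\epsilon \in \{0,1\}$. In case (2), with $\lambda \nmid AaBb$ and $e=1$, a direct Tate's algorithm run on \eqref{Frey} reduced modulo $\lambda$ detects additive reduction of Kodaira type II, III, or IV, yielding $\epsilon \in \{2,3\}$. This is the main technical obstacle, as Tate's algorithm at a prime of residue characteristic $3$ requires careful bookkeeping of valuations of the $a_i$ through successive translations; however, because the lemma permits $\epsilon$ to lie in a two-element set in each case, one only has to locate the Kodaira type rather than pin down $\epsilon$ exactly.

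Finally, for the Galois representation: once $p$ is large enough that $\mathfrak{p} \nmid 3AB$ for every $\mathfrak{p} \mid p$, either $E$ has good reduction at $\mathfrak{p}$ (in which case $E[p]$ is finite flat over $\mathcal{O}_{K_\mathfrak{p}}$ automatically), or $\mathfrak{p} \mid ab$, in which case the multiplicative reduction satisfies $p \mid v_\mathfrak{p}(\Delta_E) = p\, v_\mathfrak{p}(ab^3)$, and the standard criterion (Deligne, via the Tate curve) ensures that $E[p]\!\mid_{G_{K_\mathfrak{p}}}$ extends to a finite flat group scheme over $\mathcal{O}_{K_\mathfrak{p}}$. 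The same divisibility $p \mid v_\mathfrak{q}(\Delta_E)$ at primes $\mathfrak{q} \mid ab$ with $\mathfrak{q} \nmid 3AB$ shows that the Serre conductor exponent at such $\mathfrak{q}$ vanishes, so $\mathfrak{N}_E$ is supported on $\lambda AB$; the finiteness of the set of possible values for $\mathfrak{N}_E$ follows exactly as in Proposition \ref{Conductor of frey}, via the uniform bound $v_\mathfrak{P}(\mathfrak{N}_E) \leq 2 + 3v_\mathfrak{P}(3) + 6v_\mathfrak{P}(2)$ from \cite[Theorem IV.10.4]{advanced_silverman}.
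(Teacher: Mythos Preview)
Your proposal is correct and follows essentially the same approach as the paper: compute $c_4,c_6,\Delta_E$, use primitivity to get $v_{\mathfrak q}(c_4)=0$ at primes $\mathfrak q\nmid 3$ dividing $\Delta_E$, handle $\lambda$ via Tate's algorithm, and deduce finite-flatness from $p\mid v_{\mathfrak p}(\Delta_E)$.

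Two small points where the paper is sharper. First, in case~(1) the transformation that minimalises the model at $\lambda$ is the \emph{scaling} $X=3^2x$, $Y=3^3y$ (not a translation of $Y$ as you suggest); after this substitution one checks directly that $v_\lambda(c_4)=v_\lambda(c_6)=0$ once $v_\lambda(Bb^p)>2e$, which immediately forces good or multiplicative reduction. Second, rather than running Tate's algorithm step by step at $\lambda$, the paper reads off the conductor exponent from Papadopoulos' tables \cite[Tableau~III]{pap}, which package exactly the bookkeeping you anticipate as the ``main technical obstacle''; this is how the paper pins down $\epsilon\in\{2,3\}$ in case~(2) without having to track the Kodaira type explicitly. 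Your direct Tate's-algorithm route would work too, but the tables are the cleaner way to execute it.
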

	 
	 \begin{proof}
	 	 Recall that the invariants $c_4(E), c_6(E)$ and $\Delta_E$ of the model $E$ are given by
	 	$$c_4(E)= 9c(9Aa^p+Bb^p), \; c_6(E)=-3^3(3^3c^6-2^23^2c^3Bb^p+2^3B^2b^{2p}), \; \Delta_E=3^3AB^3(ab^3)^p.$$

	 	Suppose $\frakq \neq \lambda$ divides $\Delta_E$, which implies that  $AaBb$  is divisible by $\frakq$.   Without loss of generality, assume that $\frakq\mid Bb$.  We then see that $\frakq$ does not divide $Aa$.  Assume this is not true i.e. $\frakq \mid Aa$, then we have $\frakq \mid c$ since $\frakq \mid Aa^p+Bb^p$ but this contradicts to the primitiveness of $(Aa,Bb,c)$.  By a similar argument we also get that $\frakq$ cannot divide $c$.  Therefore, $c_4(E)= 9c(9Aa^p+Bb^p)$ is not divisible by $\frakq$, i.e. $v_{\frakq}(c_4(E))=0$ and $v_{\frakq}(\calN_E)=1$.  Hence, the given model is minimal and $E$ is semistable at $\frakq$.

	 	Now assume that  $\lambda$ divides $AaBb$. Note that  $\lambda$  can only divide one of $Aa$ or $Bb$ and also $\lambda$ cannot divide $c$. Without loss of generality say $\lambda \mid Bb$. Then we calculate $v_{\lambda}(\mathcal{N}_E)=\epsilon$ via \cite[Tableau~III]{pap}.  Note that the equation is not minimal. After using the change of variables $X=3^2x, Y=3^3y$ we get $v_\lambda(c_4(E))=v_\lambda(c_6(E))=0$ when  $v_\lambda(Bb^p)\geq 2e$, hence $\epsilon=0,1$.  We use \cite[Tableau~II]{pap} to deal with the case $\lambda \nmid AaBb$ and we obtain  $v_\lambda(\mathcal{N}_E)=\epsilon\in\{2,3\}$ when $e=1$ .  

        Note that the assertion on the Serre conductor  $\frakN_E$ can be deduced from Proposition \ref {Conductor of frey} when we take $C=1.$
	 \end{proof}

    The following is our main tool to detect when an elliptic curve has potentially multiplicative reduction. One of the directions is proven using the theory of the Tate curve (see \cite[Proposition V.6.1]{advanced_silverman}). The other direction follows from the fact that if $E$ has potentially good reduction at $\mathfrak{q}\nmid p$, then $24\mid\#\overline{\rho}_{E,p}(I_\mathfrak{q})$ \cite[Section 1]{Kraus90}.
\begin{lemma}\cite[Lemma 3.4]{FS}\label{p divides image of inertia}
    Let $E/K$ be an elliptic curve with $j$-invariant $j_E$. Let $p\geqslant 5$ be a rational prime and $\mathfrak{q}\nmid p$ a prime of $K$. Then $p\mid\#\overline{\rho}_{E,p}(I_\mathfrak{q})$ if and only if $p\nmid v_\mathfrak{q}(j_E)$ and $E$ has potentially multiplicative reduction at $\mathfrak{q}$. 
\end{lemma}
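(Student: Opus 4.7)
The plan is to establish the biconditional by proving each implication separately, following the two foundational inputs indicated in the statement: Tate uniformization in the potentially multiplicative case, and Kraus's $24$-divisibility of the inertia image in the potentially good case.

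First I would handle the direction ``$p\mid \#\overline{\rho}_{E,p}(I_\mathfrak{q})$ forces potentially multiplicative reduction at $\mathfrak{q}$'' by the contrapositive. If $E$ has potentially good reduction at $\mathfrak{q}$, there is a finite extension $L/K_\mathfrak{q}$ over which $E$ acquires good reduction; by the N\'eron--Ogg--Shafarevich criterion, and using $\mathfrak{q}\nmid p$, the inertia subgroup $I_L\subseteq G_L$ acts trivially on $E[p]$, so $\overline{\rho}_{E,p}(I_\mathfrak{q})$ factors through the finite quotient $I_\mathfrak{q}/I_L$. Invoking the result of Kraus cited in the statement, $\#\overline{\rho}_{E,p}(I_\mathfrak{q})$ divides $24$, hence is coprime to $p$ because $p\geqslant 5$. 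This disposes of the potentially good case; the other possibility within the contrapositive, namely potentially multiplicative reduction with $p\mid v_\mathfrak{q}(j_E)$, I would defer to the Tate analysis below, where it will fall out as the degenerate case.

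For the reverse direction I would appeal to the theory of the Tate curve. After at most a quadratic twist --- needed to pass from potentially multiplicative to genuinely split multiplicative reduction over a quadratic extension $K'/K_\mathfrak{q}$ --- the Tate uniformization yields $E(\overline{K_\mathfrak{q}})\cong \overline{K_\mathfrak{q}}^{\times}/q_E^{\mathbb{Z}}$ with Tate parameter $q_E$ satisfying $v_\mathfrak{q}(q_E)=-v_\mathfrak{q}(j_E)>0$. A basis of $E[p]$ is then $\{\zeta_p,\,q_E^{1/p}\}$: since $\mathfrak{q}\nmid p$ the extension $K_\mathfrak{q}(\zeta_p)/K_\mathfrak{q}$ is unramified, so inertia fixes $\zeta_p$; and $K_\mathfrak{q}(q_E^{1/p})/K_\mathfrak{q}$ is totally ramified of degree $p/\gcd(p,v_\mathfrak{q}(q_E))$. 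Thus the inertia image contains an element of order $p$ precisely when $p\nmid v_\mathfrak{q}(q_E)$, i.e.\ when $p\nmid v_\mathfrak{q}(j_E)$, and has trivial $p$-part otherwise. The possible quadratic twist can multiply the order of the image by at most $2$, which is coprime to $p\geqslant 5$ and therefore does not affect $p$-divisibility.

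The main obstacle I anticipate is the bookkeeping around the quadratic twist: one has to verify that the property ``potentially multiplicative'' and the invariant $v_\mathfrak{q}(j_E)$ are both unchanged by a quadratic twist, and confirm that the twist contributes only a factor dividing $2$ to $\#\overline{\rho}_{E,p}(I_\mathfrak{q})$. Once these compatibilities are in place, both directions follow directly from the two cited inputs, so the argument is essentially an assembly of standard facts rather than a new computation.
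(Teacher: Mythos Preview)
Your proposal is correct and follows essentially the same approach as the paper, which does not give a standalone proof but merely indicates (in the paragraph preceding the lemma) that one direction comes from the Tate uniformization and the other from Kraus's result that the inertia image has order dividing $24$ in the potentially good case. Your write-up simply fleshes out these two inputs, and your handling of the quadratic-twist bookkeeping is the standard way to make the Tate-curve argument precise.
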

Define $$S_K = \{\mathfrak{L}:\mathfrak{L}\text{ is a prime in }K\text{ and }\mathfrak{L}\mid 3\}.$$ Then Lemma \ref{p divides image of inertia} helps us achieve the following result. This result along with Lemma \ref{p divides image of inertia} is used in Section \ref{level lowering} to get a grasp on the reduction of the level lowered curve at primes in $S_K$.
\begin{lemma}\label{p divides inertia of frey}
    Let $E$ be the Frey curve in \eqref{the frey curve} corresponding to a solution $(a,b,c)\in W_K$. Suppose that $$p>\max_{\mathfrak{L}\in S_K}\{v_\mathfrak{L}(3^2A),v_\mathfrak{L}(B),v_\mathfrak{L}(C),|v_\mathfrak{L}(3) + v_\mathfrak{L}(BA^{-1})|, |3v_\mathfrak{L}(3) + v_\mathfrak{L}(AB^{-1})|\}.$$ Then for every $\mathfrak{L}\in S_K$, we have $p\mid\#\overline{\rho}_{E,p}(I_\mathfrak{L})$. 
\end{lemma}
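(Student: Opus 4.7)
The plan is to apply Lemma \ref{p divides image of inertia} to the Frey curve $E$ at each prime $\mathfrak{L}\in S_K$. Since $\mathfrak{L}\mid 3$ and $p\geqslant 5$, we have $\mathfrak{L}\nmid p$, so that criterion reduces the claim to verifying both that $E$ has potentially multiplicative reduction at $\mathfrak{L}$ (equivalently $v_\mathfrak{L}(j_E)<0$) and that $p\nmid v_\mathfrak{L}(j_E)$. Both will be read off from an explicit computation of $v_\mathfrak{L}(j_E)$ via the formula
\[
j_E = \frac{3^3 C c^3(9Aa^p+Bb^p)^3}{A B^3 (ab^3)^p}.
\]

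Because $(a,b,c)\in W_K$ forces $\mathfrak{L}\mid ab$, and because primitivity together with $p>v_\mathfrak{L}(C)$ forces $\mathfrak{L}\nmid c$, exactly one of $\mathfrak{L}\mid a$ or $\mathfrak{L}\mid b$ occurs. I would then extract from the defining equation $Aa^p+Bb^p=Cc^3$ the relation $v_\mathfrak{L}(B)=v_\mathfrak{L}(C)$ when $\mathfrak{L}\mid a$, and $v_\mathfrak{L}(A)=v_\mathfrak{L}(C)$ when $\mathfrak{L}\mid b$: these are ultrametric consequences, since otherwise the minimum of $v_\mathfrak{L}(B)$ and $v_\mathfrak{L}(C)$ (or of $v_\mathfrak{L}(A)$ and $v_\mathfrak{L}(C)$) would have to exceed $v_\mathfrak{L}(A)+p$, contradicting $p>v_\mathfrak{L}(B),v_\mathfrak{L}(C)$.

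A further ultrametric estimate then gives $v_\mathfrak{L}(9Aa^p+Bb^p)=v_\mathfrak{L}(B)$ in the first case, and $v_\mathfrak{L}(9Aa^p+Bb^p)=2v_\mathfrak{L}(3)+v_\mathfrak{L}(A)$ in the second, using the hypotheses $p>v_\mathfrak{L}(3^2A)$ and $p>v_\mathfrak{L}(B)$. Substituting into the formula and applying the relation from the previous step simplifies $v_\mathfrak{L}(j_E)$ in each case to a non-$p$ integer---controlled in absolute value by the hypotheses on $|v_\mathfrak{L}(3)+v_\mathfrak{L}(BA^{-1})|$ and $|3v_\mathfrak{L}(3)+v_\mathfrak{L}(AB^{-1})|$---minus $pv_\mathfrak{L}(a)$ in Case~1 or $3pv_\mathfrak{L}(b)$ in Case~2. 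The $p$-term dominates in size, yielding $v_\mathfrak{L}(j_E)<0$ and hence potentially multiplicative reduction; meanwhile the size bound together with $\gcd(p,3)=1$ forces the non-$p$ part to be nonzero modulo $p$. Applying Lemma \ref{p divides image of inertia} then delivers $p\mid \#\overline{\rho}_{E,p}(I_\mathfrak{L})$ for every $\mathfrak{L}\in S_K$.

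The main obstacle is the bookkeeping that tracks the asymmetry between the two cases: $a$ enters the denominator of $j_E$ with exponent $p$ while $b$ enters with exponent $3p$, and correspondingly $v_\mathfrak{L}(9Aa^p+Bb^p)$ picks up the contribution from $9A$ in Case~2 but not in Case~1. This asymmetry is exactly what produces the two differently-weighted absolute-value hypotheses in the statement; once the match-up is carried out cleanly, the remainder of the argument reduces to elementary comparisons of integer valuations.
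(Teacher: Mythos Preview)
Your proposal is correct and follows essentially the same route as the paper: split into the cases $\mathfrak{L}\mid a$ and $\mathfrak{L}\mid b$, compute $v_\mathfrak{L}(j_E)$ explicitly in each, and feed the conclusions $v_\mathfrak{L}(j_E)<0$ and $p\nmid v_\mathfrak{L}(j_E)$ into Lemma~\ref{p divides image of inertia}. The only tactical difference is that the paper substitutes $Cc^3=Aa^p+Bb^p$ directly into the numerator of $j_E$ before taking valuations, whereas you keep the $Cc^3$ factor and instead extract $v_\mathfrak{L}(C)=v_\mathfrak{L}(B)$ (resp.\ $v_\mathfrak{L}(C)=v_\mathfrak{L}(A)$) from the defining equation; both routes produce the identical expressions $v_\mathfrak{L}(j_E)=3v_\mathfrak{L}(3)+v_\mathfrak{L}(BA^{-1})-pv_\mathfrak{L}(a)$ and $v_\mathfrak{L}(j_E)=3\bigl(3v_\mathfrak{L}(3)+v_\mathfrak{L}(AB^{-1})-pv_\mathfrak{L}(b)\bigr)$.
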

\begin{proof}
Rewrite the $j$-invariant of $E$ as $$j = 3^3\frac{Cc^3(3^2Aa^p + Bb^p)^3}{AB^3(ab^3)^p} = 3^3\frac{(Aa^p+Bb^p)(3^2Aa^p + Bb^p)^3}{AB^3(ab^3)^p}.$$
We aim to apply Lemma \ref{p divides image of inertia}. Let $\mathfrak{L}$ be a prime in $S_K$. Since $p>v_\mathfrak{L}(C)$, we have that $\mathfrak{L}$ can divide only one of $a$ or $b$. Indeed, if  $\mathfrak{L}$ divides both $a$ and $b$, we obtain $\mathfrak{L}^p\mid Aa^p + Bb^p = Cc^3$. Since $p>v_\mathfrak{L}(C)$, it follows that $\mathfrak{L}\mid c$ which contradicts to the primitivity of $(a,b,c)$. If $\mathfrak{L}\mid a$, then $\mathfrak{L}\nmid b$ and since $p>v_\mathfrak{L}(B)$,
\begin{align*}
v_\mathfrak{L}(j) &= 3v_\mathfrak{L}(3) + v_\mathfrak{L}(B) + 3v_\mathfrak{L}(B) -v_\mathfrak{L}(AB^3) - pv_\mathfrak{L}(a) \\  
&= 3v_\mathfrak{L}(3) + v_\mathfrak{L}(BA^{-1}) - pv_\mathfrak{L}(a)
\end{align*}
and since $p>|3v_\mathfrak{L}(3) + v_\mathfrak{L}(BA^{-1})|$, it follows $v_\mathfrak{L}(j) < 0$ and $p\nmid v_\mathfrak{L}(j)$.

If $\mathfrak{L}\mid b$, then $\mathfrak{L}\nmid a$, and since $p>v_\mathfrak{L}(3^2A)$, 
\begin{align*}
v_\mathfrak{L}(j) &= 3v_\mathfrak{L}(3) + v_\mathfrak{L}(A) + 6v_\mathfrak{L}(3) + 3v_\mathfrak{L}(A) - v_\mathfrak{L}(AB^3) - 3pv_\mathfrak{L}(b) \\ &= 3\big(3v_\mathfrak{L}(3) + v_\mathfrak{L}(AB^{-1}) - pv_\mathfrak{L}(b)\big)
\end{align*}
and since $p>|3v_\mathfrak{L}(3) + v_\mathfrak{L}(AB^{-1})|$, it follows that $v_\mathfrak{L}(j) < 0$ and $p\nmid v_\mathfrak{L}(j)$. In both cases, we get $p\mid\#\overline{\rho}_{E,p}(I_\mathfrak{L})$ from Lemma \ref{p divides image of inertia} .
\end{proof}

\section{Properties of Galois representations}
\subsection{Level lowering}\label{level lowering}
In this subsection, as is standard, we will be relating the Galois representation attached to the Frey curve with another representation of lower level.  That is, we construct a new elliptic curve $E'$ which relates to $E$ via its mod $p$ Galois representation. This new $E'$ will have conductor $\mathfrak{N}_E$ (obtained after level lowering) which does not depend on $a$ and $b$. It is then the curve $E'$ that will lead us to a contradiction. We construct this $E'$ using the modularity conjectures from Section \ref{Conjectures}. The properties of $E'$ are summarized in the following theorem and the rest of this section is dedicated to proving this result. The idea of the proof is heavily inspired by that of \cite[Section 3]{IKO2} and can be seen as a generalization thereof. 
\begin{theorem}\label{the theorem describing E'}
    Let $K$ be a number field satisfying Conjectures \ref{conj1} and \ref{conj2}. Then, there is a constant $V=V(K,A,B,C)$ depending only on $K,A,B$ and $C$ such that the following holds. Let $(a,b,c)\in W_K$ be a solution to \eqref{maineqn} with prime exponent $p>V$. Let $E/K$ be the associated Frey curve defined in \eqref{the frey curve}. Then there is an elliptic curve $E'/K$ such that the following statements hold:
    \begin{enumerate}[noitemsep]
        \item [(i)] $E'$ has good reduction away from $T_K$;
        \item [(ii)] $E'$ has a $K$-rational point of order $3$;
        \item [(iii)] $\overline{\rho}_{E',p}\sim\overline{\rho}_{E,p}$;
        \item [(iv)] for every prime $\mathfrak{L}\in S_K$ we have $v_\mathfrak{L}(j_{E'})<0$. 
    \end{enumerate}
\end{theorem}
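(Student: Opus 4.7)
The plan is to execute the standard level-lowering template: use Proposition \ref{Conductor of frey} to constrain the Serre conductor of the Frey curve $E$, invoke Conjecture \ref{conj1} to produce an associated mod $p$ eigenform, lift it to characteristic zero for $p$ large enough, and then apply Conjecture \ref{conj2} to extract an elliptic curve $E'$ whose mod $p$ representation matches that of $E$.

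First I would verify the hypotheses of Conjecture \ref{conj1} for $\overline{\rho}_{E,p}$. Proposition \ref{Conductor of frey} already supplies that $\overline{\rho}_{E,p}$ has Serre conductor $\mathfrak{N}_E$ supported on $T_K$, is finite flat above $p$, and has cyclotomic determinant (all for $p$ larger than an explicit bound). Oddness is automatic: if $K$ has a real place the rational $3$-torsion of $E$ forces the eigenvalue $-1$ under every complex conjugation, while if $K$ is totally complex oddness is vacuous. The substantive point is irreducibility of $\overline{\rho}_{E,p}$ for $p$ sufficiently large; the $K$-rational $3$-torsion on $E$ together with potentially multiplicative reduction at every $\mathfrak{L}\in S_K$ (Lemma \ref{p divides inertia of frey}) excludes a $K$-rational $p$-isogeny via the standard bounds on isogeny characters over number fields, outside an effectively bounded finite set of primes. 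Conjecture \ref{conj1} then yields a mod $p$ eigenform $\theta$ of level $\mathfrak{N}_E$ attached to $\overline{\rho}_{E,p}$.

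The next step is to lift $\theta$ to characteristic zero. Proposition \ref{Conductor of frey} guarantees that $\mathfrak{N}_E$ ranges over a finite set depending only on $K,A,B,C$, and for each such level the space of complex weight-two eigenforms is finite dimensional; enlarging $V$ so that $p$ exceeds all denominators appearing in the Hecke eigenvalues of the finitely many candidate systems, $\theta$ must be the reduction of a complex weight-two eigenform $\mathfrak{f}$ of level $\mathfrak{N}_E$. Conjecture \ref{conj2} then associates to $\mathfrak{f}$ either an elliptic curve $E'/K$ of conductor $\mathfrak{N}_E$, or, when $K$ is totally complex, possibly a fake elliptic curve $A_\mathfrak{f}$ of conductor $\mathfrak{N}_E^2$. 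I would rule out the fake case following \cite[Section 3]{IKO2}: the $K$-rational $3$-isogeny on $E$ forces the $p$-adic representation of $A_\mathfrak{f}$ to decompose in a manner incompatible with the indefinite quaternionic endomorphism structure, and the hypothesis $\mathrm{Cl}_{T_K}(K)[3]=1$ is precisely what ensures this incompatibility propagates cleanly through the full isogeny class.

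Properties (i) and (iii) are then immediate from the construction. For (ii), the reducible line in $\overline{\rho}_{E,3}$ coming from the $3$-torsion point of $E$ transfers, via the matching of $\overline{\rho}_{E,p}$ with $\overline{\rho}_{E',p}$ together with a parallel level-lowering argument at $\ell=3$, to a $K$-rational $3$-isogeny on $E'$; the triviality of $\mathrm{Cl}_{T_K}(K)[3]$ then promotes its kernel to a full $K$-rational $3$-torsion point, possibly after replacing $E'$ by its $3$-isogenous partner within its isogeny class. For (iv), Lemma \ref{p divides inertia of frey} gives $p\mid\#\overline{\rho}_{E,p}(I_\mathfrak{L})$ at every $\mathfrak{L}\in S_K$; transferring via $\overline{\rho}_{E',p}\sim\overline{\rho}_{E,p}$ and applying the converse direction of Lemma \ref{p divides image of inertia}, $E'$ has potentially multiplicative reduction at each such $\mathfrak{L}$, so $v_\mathfrak{L}(j_{E'})<0$. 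I expect the main obstacles to be the quantitative lifting step, which absorbs most of the constant $V$ and requires enumerating Hecke eigensystems at each of the finitely many candidate levels, and the elimination of fake elliptic curves, where the interplay between the Frey $3$-isogeny and the hypothesis $\mathrm{Cl}_{T_K}(K)[3]=1$ must be handled with care.
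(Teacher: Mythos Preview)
Your overall template is right, and your argument for (iv) matches the paper's. But there are two genuine gaps.

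\textbf{The hypothesis $\mathrm{Cl}_{T_K}(K)[3]=1$ plays no role in this theorem.} You invoke it twice---once to rule out fake elliptic curves and once to upgrade a $3$-isogeny to a $3$-torsion point---and both uses are spurious. In the paper, the class group condition appears only later, in the $S$-unit step of Theorem~\ref{main theorem}. The fake-elliptic-curve case is dispatched purely by inertia: by Lemma~\ref{p divides inertia of frey} we have $p\mid\#\overline{\rho}_{E,p}(I_{\mathfrak L})$ at every $\mathfrak L\in S_K$, while for a fake elliptic curve $A_{\mathfrak f}$ (which has potentially good reduction everywhere) one has $\#\overline{\rho}_{A_{\mathfrak f},p}(I_{\mathfrak L})\le 24$. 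This is exactly Lemma~\ref{bullet} (i.e.\ \cite[Lemma~7.3]{SS}), and it has nothing to do with $3$-isogenies or class groups.

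\textbf{Your argument for (ii) does not work.} The congruence $\overline{\rho}_{E,p}\sim\overline{\rho}_{E',p}$ carries no information whatsoever about $E'[3]$, so there is no ``parallel level-lowering at $\ell=3$'' to run; and the claim that triviality of $\mathrm{Cl}_{T_K}(K)[3]$ promotes the kernel of a $3$-isogeny to a rational point is simply false (these are unrelated phenomena). The paper's mechanism is different and essentially finite: since $\mathfrak N'$ is supported on $T_K$, the curve $E'$ lies in one of finitely many isogeny classes depending only on $K,A,B,C$. For any such $E'$ with no $K$-rational $3$-isogeny, Katz's theorem \cite{Katz} supplies a prime $\mathfrak P\notin T_K$ with $3\nmid\#E'(\mathcal O_K/\mathfrak P)$; comparing with $3\mid\#E(\mathcal O_K/\mathfrak P)$ (from the rational $3$-torsion on $E$) via $\overline{\rho}_{E,p}\sim\overline{\rho}_{E',p}$ forces $p$ to divide a fixed nonzero integer, hence $p<C_{E'}$ (Lemma~\ref{that good}). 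Enlarging $V$ past the finitely many $C_{E'}$'s, $E'$ must admit a $K$-rational $3$-isogeny, and one then replaces $E'$ by its $3$-isogenous partner if necessary to get the rational $3$-torsion point---note that a $3$-isogeny preserves (iii) and (iv) since $p\neq 3$ and the $j$-invariants differ by an integral modular relation.
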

Before we dive into the proof of Theorem \ref{the theorem describing E'}, we cover some required results.

The following well-known result about subgroups of $\GL_2(\mathbb F_p)$ will be frequently used.
	
	\begin{theorem}\label{subgroups} Let $E$ be an  elliptic curve over a number field $K$ of degree $d$ and let $G \leq \GL_2(\mathbb F_p)$ be the
		image of the mod $p$ Galois representation of $E$.
		Then the following holds:
		\begin{itemize}
			\item if $p \mid  \#G$ then either $G$ is reducible or $G$ contains $\SL_2(\mathbb F_p)$, hence it is absolutely irreducible. 
			\item if $p \nmid \#G$ and $p > 15 d +1$ then $G$ is contained in a Cartan subgroup or $G$ is contained in the normalizer of Cartan subgroup but not the Cartan subgroup itself.
			
		\end{itemize}
	\end{theorem}

	\begin{proof}
		The main reference of the proof is \cite[Lemma 2]{SD}. The version above including the proof of the second part is from \cite[Propositions 2.3 and 2.6]{localglobal}.
	\end{proof}

The following is Proposition 6.1 from \cite{SS}.

\begin{proposition}\label{irreducibility for Galois NF}
    Let $L$ be a Galois number field, and let $\mathfrak{q}$ be a prime of $L$. There is a constant $B_{L,\mathfrak{q}}$ such that the following is true. Let $p>B_{L,\mathfrak{q}}$ be a rational prime. Let $E/L$ be an elliptic curve that is semistable at all $\mathfrak{p}\mid p$ and having potentially multiplicative reduction at $\mathfrak{q}$. Then $\overline{\rho}_{E,p}$ is irreducible.
\end{proposition}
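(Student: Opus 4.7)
The plan is to assume $\overline{\rho}_{E,p}$ is reducible and derive a bound on $p$ depending only on $L$ and $\mathfrak{q}$. Under the reducibility hypothesis there exist characters $\theta,\theta':G_L\to\Fbar_p^\times$ with $\theta\theta'=\chi_p$ such that $E[p]$ admits a $G_L$-stable line on which Galois acts via $\theta$; the goal is to pin down $\theta^{12}$ tightly enough that it lies in a finite set depending only on $L$ and $\mathfrak{q}$, and then to use the characteristic polynomial of Frobenius at an auxiliary prime to convert this finiteness into a bound on $p$.

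First, a local analysis. At each $\mathfrak{p}\mid p$, the semistability of $E$ at $\mathfrak{p}$ together with Serre's description of the $G_\mathfrak{p}$-representation on $E[p]$ forces $\theta|_{I_\mathfrak{p}}\in\{1,\chi_p|_{I_\mathfrak{p}}\}$, since the only other possibility (good supersingular reduction) would make the local representation irreducible and contradict global reducibility. At $\mathfrak{q}$, once $p$ exceeds the residue characteristic of $\mathfrak{q}$, Lemma~\ref{p divides image of inertia} gives $p\mid\#\overline{\rho}_{E,p}(I_\mathfrak{q})$; because a reducible representation has image in upper triangular matrices, the factor of $p$ in the inertia image must come from the unipotent radical, and the theory of the Tate curve then shows that $\theta|_{I_\mathfrak{q}}$ has order dividing $2$. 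In particular $\theta^{12}$ is unramified at $\mathfrak{q}$ and has controlled inertial behavior at every prime above $p$.

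Next, globalize via class field theory. After raising to a further uniformly bounded power (to kill any tame ramification of $\theta$ at the remaining bad primes of $E$, using that tame inertia is topologically cyclic of order prime to $p$ for $p$ large), the character $\theta^{12}$ becomes a Hecke-type character of $L$ whose conductor is supported on $\{\mathfrak{p}:\mathfrak{p}\mid p\}$ and whose restriction to inertia at each such $\mathfrak{p}$ is a power of $\chi_p^{12}|_{I_\mathfrak{p}}$ drawn from a fixed finite list. Standard arguments, as in Momose or Freitas--Siksek \cite{FS}, show that the set of such characters has cardinality bounded in terms of $L$ and $\mathfrak{q}$ alone. Fix once and for all an auxiliary prime $\mathfrak{r}$ of $L$, disjoint from $\mathfrak{q}$ and of bounded norm, at which $E$ has good reduction. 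The characteristic polynomial of Frobenius at $\mathfrak{r}$ gives the congruence
\[
\theta(\Frob_\mathfrak{r})^2-a_\mathfrak{r}(E)\,\theta(\Frob_\mathfrak{r})+\nr(\mathfrak{r})\equiv 0\pmod{p},
\]
so $\theta^{12}(\Frob_\mathfrak{r})\equiv\alpha^{12}\pmod{p}$ for one of the two Frobenius eigenvalues $\alpha$. Varying $a_\mathfrak{r}(E)$ over the integers satisfying the Hasse bound $|a_\mathfrak{r}(E)|\leq 2\sqrt{\nr(\mathfrak{r})}$, the difference $\theta^{12}(\Frob_\mathfrak{r})-\alpha^{12}$ ranges over a finite set of algebraic integers depending only on $L$, $\mathfrak{q}$, and $\mathfrak{r}$; choosing $\mathfrak{r}$ so that no such difference vanishes, $p$ must divide the product of their norms, giving the desired bound $B_{L,\mathfrak{q}}$.

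The main obstacle is producing the finite set of candidate characters $\theta^{12}$ with cardinality depending only on $L$ and $\mathfrak{q}$ and not on $E$. The Galois hypothesis on $L$ is crucial for applying class field theory uniformly to the ray class groups involved, and the potentially multiplicative reduction at $\mathfrak{q}$ plays two roles: it yields the unramifiedness of $\theta^{12}$ at $\mathfrak{q}$ via Lemma~\ref{p divides image of inertia}, and it precludes $E$ from having complex multiplication, which would otherwise allow the isogeny character $\theta$ to encode CM-type data with infinity behavior not controlled by $L$ and $\mathfrak{q}$ alone.
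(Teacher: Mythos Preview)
The paper does not prove this proposition; it is quoted as Proposition~6.1 of \cite{SS} and used as a black box. Your outline follows the Momose--type isogeny-character argument behind that reference, and the shape is right, but there is a real gap at the Frobenius step.

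You write ``Fix once and for all an auxiliary prime $\mathfrak{r}$ of $L$ \ldots\ at which $E$ has good reduction,'' yet $\mathfrak{r}$ is supposed to depend only on $(L,\mathfrak{q})$ while $E$ is arbitrary and can have additive reduction at any prescribed prime; no such $\mathfrak{r}$ exists in general. The repair is to evaluate at $\mathfrak{q}$ itself: potentially multiplicative reduction pins down $\overline{\rho}_{E,p}|_{G_\mathfrak{q}}$ up to a quadratic twist, so $\theta^2$ is unramified at $\mathfrak{q}$ and $\theta^2(\Frob_\mathfrak{q})\in\{1,\nr(\mathfrak{q})^2\}\pmod p$. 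Relatedly, the clause ``choosing $\mathfrak{r}$ so that no such difference vanishes'' hides the hard part of the argument rather than dispatching it. One must split by the signature $(n_\mathfrak{p})_{\mathfrak{p}\mid p}$ with $\theta|_{I_\mathfrak{p}}=\chi_p^{n_\mathfrak{p}}|_{I_\mathfrak{p}}$: for the two pure signatures one of $\theta,\theta'$ has uniformly bounded order, so $E$ acquires a rational $p$-torsion point over an extension of bounded degree and Merel's theorem bounds $p$; for a mixed signature the associated algebraic Hecke character $\psi_s$ has $1<|\psi_s(\Frob_\mathfrak{q})|<\nr(\mathfrak{q})^{12}$ at every archimedean place, so neither $\psi_s(\Frob_\mathfrak{q})-1$ nor $\psi_s(\Frob_\mathfrak{q})-\nr(\mathfrak{q})^{12}$ vanishes and $p$ divides a fixed nonzero norm. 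A smaller point: your justification for bounded ramification of $\theta$ away from $p$ (``tame inertia is topologically cyclic of order prime to $p$'') gives no uniform bound on the order; the correct input is that $\#\overline{\rho}_{E,p}(I_\mathfrak{r})\mid 24$ at primes $\mathfrak{r}\nmid p$ of potentially good reduction, while at primes of potentially multiplicative reduction the diagonal characters have order dividing $2$.
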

Applying this to our Frey curve \eqref{the frey curve}, we obtain the first ingredient to prove Theorem \ref{the theorem describing E'}. 
\begin{cor}\label{surjective for p>CK}
    There is a constant $D = D(K,A,B,C)$ depending only on $K,A,B$ and $C$ such that the following holds. If $E$ is the Frey curve \eqref{the frey curve} corresponding to a solution $(a,b,c)\in W_K$ with exponent $p>D$, then the mod $p$ Galois representation $\overline{\rho}_{E,p}$ is surjective. 
\end{cor}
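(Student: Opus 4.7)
The goal is to upgrade the irreducibility supplied by Proposition~\ref{irreducibility for Galois NF} to full surjectivity onto $\GL_2(\mathbb{F}_p)$. My plan has four steps, and all thresholds will be absorbed into a single constant $D=D(K,A,B,C)$ at the end.

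First, I would reduce to a Galois setting, since Proposition~\ref{irreducibility for Galois NF} is stated over a Galois number field while $K$ need not be Galois. Let $L/\Q$ denote the Galois closure of $K$, fix a prime $\mathfrak{L}\in S_K$, and pick a prime $\mathfrak{Q}$ of $L$ above $\mathfrak{L}$. For $p$ exceeding the explicit threshold of Lemma~\ref{p divides inertia of frey}, the Frey curve $E$ has potentially multiplicative reduction at $\mathfrak{L}$, hence $E_L$ has potentially multiplicative reduction at $\mathfrak{Q}$. By Proposition~\ref{Conductor of frey}, for sufficiently large $p$ the curve $E$ is semistable at every prime above $p$, and the same property holds for $E_L$. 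Applying Proposition~\ref{irreducibility for Galois NF} to $E_L$ yields irreducibility of $\overline{\rho}_{E_L,p}$ for $p>B_{L,\mathfrak{Q}}$; this constant depends only on $K$ since $L$ and $\mathfrak{Q}$ are determined by $K$. Because $G_L\subseteq G_K$, irreducibility of the restriction $\overline{\rho}_{E,p}|_{G_L}=\overline{\rho}_{E_L,p}$ forces $\overline{\rho}_{E,p}$ itself to be irreducible.

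Second, I would produce a transvection in the image. Lemma~\ref{p divides inertia of frey} guarantees that, for $p$ beyond the explicit bound stated there, $p\mid \#\overline{\rho}_{E,p}(I_\mathfrak{L})$. Since every element of order $p$ in $\GL_2(\mathbb{F}_p)$ is conjugate to a non-trivial upper-triangular unipotent matrix, the image then contains a transvection. Dickson's classification of subgroups of $\GL_2(\mathbb{F}_p)$ (valid for $p\geqslant 5$) asserts that any subgroup containing a transvection either lies in a Borel subgroup or contains $\SL_2(\mathbb{F}_p)$; the irreducibility established above excludes the Borel alternative, so $\SL_2(\mathbb{F}_p)\subseteq \overline{\rho}_{E,p}(G_K)$.

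Finally, $\det\overline{\rho}_{E,p}=\chi_p$ by Proposition~\ref{Conductor of frey}. For $p$ larger than a bound depending only on $K$ (ensuring $K\cap \Q(\zeta_p)=\Q$), the cyclotomic character surjects onto $\mathbb{F}_p^\times$. Combining with the preceding containment through the exact sequence
\[
1\longrightarrow \SL_2(\mathbb{F}_p)\longrightarrow \GL_2(\mathbb{F}_p)\xrightarrow{\det}\mathbb{F}_p^\times\longrightarrow 1
\]
gives $\overline{\rho}_{E,p}(G_K)=\GL_2(\mathbb{F}_p)$. Taking $D$ to be the maximum of the finitely many thresholds above concludes the argument. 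The main delicate point is the reduction to the Galois setting and checking that $B_{L,\mathfrak{Q}}$, together with the thresholds from Lemma~\ref{p divides inertia of frey}, Proposition~\ref{Conductor of frey}, and the cyclotomic condition, can all be absorbed into a single constant depending only on $(K,A,B,C)$; the remaining ingredients are classical linear algebra over $\mathbb{F}_p$.
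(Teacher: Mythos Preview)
Your argument is correct and follows essentially the same route as the paper: pass to the Galois closure $L$, use potentially multiplicative reduction at a prime above $3$ together with semistability above $p$ to invoke Proposition~\ref{irreducibility for Galois NF}, deduce irreducibility over $K$, then combine the order-$p$ element in the image from Lemma~\ref{p divides inertia of frey} with Dickson's classification (the paper cites \cite[Propositions~2.3 and~2.6]{localglobal}) to obtain $\SL_2(\mathbb{F}_p)$ in the image, and finish via surjectivity of $\chi_p$. The only cosmetic difference is that the paper takes the maximum of $B_{L,\mathfrak{q}}$ over all primes $\mathfrak{q}$ of $L$ above the chosen $\mathfrak{L}$, whereas you fix a single $\mathfrak{Q}$; either suffices.
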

\begin{proof}
    From Lemma \ref{p divides image of inertia} and Lemma \ref{p divides inertia of frey}, for sufficiently large $p$, the curve $E$ has potentially multiplicative reduction at the primes in $S_K$. Additionally, $E$ is semistable outside of $T_K$ by Proposition \ref{Conductor of frey}. Let $L$ be the Galois closure of $K$ and let $G_L=\Gal(\bar{L}/L)$ denote the absolute Galois group of $L$. And let $\mathfrak{q}$ in be a prime in $L$ above any prime in $S_K$, say, $\mathfrak{L}$. Let $B_{L,\mathfrak{q}}$ be the constant from Proposition \ref{irreducibility for Galois NF}. If we let $p$ be large enough such that no prime in $T_K$ lies above $p$, say, $p> c=c(K,A,B,C)$, then by Proposition \ref{Conductor of frey}, $E$ is semistable at the primes above $p$. If we enlarge $p$ further such that $p>B_{L,\mathfrak{q}}$, it follows that $\overline{\rho}_{E,p}(G_L)$ is irreducible. There are only finitely many primes $\mathfrak{q}$ above $\mathfrak{L}$ and $L$ depends only on $K$. So taking $$D = \max_{\mathfrak{q}\mid \mathfrak{L}}\{B_{L,\mathfrak{q}},c\}$$ gives us that $\overline{\rho}_{E,p}\colon G_L\to \GL_2(\mathbb{F}_p)$ is irreducible whenever $p> D$. Since $G_L$ is a subgroup of $G_K$, it follows that $\overline{\rho}_{E,p}\colon G_K\to \GL_2(\mathbb{F}_p)$ is irreducible whenever, $p>D$. 
    If necessary, enlarge $D$ so that, by Lemma \ref{p divides inertia of frey}, we have  $p\mid\#\overline{\rho}(G_K)$. 
    By Theorem \ref{subgroups} and the fact that $\overline{\rho}_{E,p}$ is irreducible, it follows that $\overline{\rho}_{E,p}(G_K)$ contains $\text{SL}_2(\mathbb{F}_p)$. It now follows that $\overline{\rho}_{E,p}$ is surjective when $\chi_p = \det\overline{\rho}_{E,p}$ is surjective. This can be ensured by taking $D$ large enough so that $\zeta_p\not\in K$.
\end{proof}

The following results are very technical results relating eigenforms to elliptic curves. Not everything is known about this connection (over general number fields) so this is where the conjectures come in.

\begin{proposition}\label{lifting of forms}\cite[Proposition 2.1]{SS}
    Let $\mathfrak{N}$ be an ideal of $\mathcal{O}_K$. There is an integer $B(\mathfrak{N})$ depending only on $\mathfrak{N}$, such that, for any prime $p>B(\mathfrak{N})$, every weight two, mod $p$ eigenform of level $\mathfrak{N}$ lifts to a complex one.
\end{proposition}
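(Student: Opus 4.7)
The plan is to reduce the lifting problem to a classical Hecke-module argument and then invoke the Deligne--Serre lifting lemma.

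First, I would unpack the definition from \cite{SS}: a weight-two mod $p$ eigenform of level $\mathfrak{N}$ is a system of Hecke eigenvalues $\theta\colon \mathbb{T}\to\overline{\mathbb{F}}_p$ occurring in the cohomology $H^d(Y_0(\mathfrak{N}),\overline{\mathbb{F}}_p)$ of the appropriate arithmetic locally symmetric space (with $d$ the cohomological degree fixed in \cite{SS}), and likewise a complex eigenform is a system $\Theta\colon\mathbb{T}\to\mathbb{C}$ occurring in $H^d(Y_0(\mathfrak{N}),\mathbb{C})$. Here $\mathbb{T}$ is the commutative $\mathbb{Z}$-algebra generated by the Hecke operators $T_{\mathfrak{q}}$ for $\mathfrak{q}\nmid\mathfrak{N}$, acting on the integral cohomology $M:=H^d(Y_0(\mathfrak{N}),\mathbb{Z})$, which is a finitely generated $\mathbb{Z}$-module because $Y_0(\mathfrak{N})$ has the homotopy type of a finite CW-complex.

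Next, I would let $B(\mathfrak{N})$ be the largest rational prime dividing the order of the finite torsion subgroup of $M\oplus H^{d+1}(Y_0(\mathfrak{N}),\mathbb{Z})$. For every prime $p>B(\mathfrak{N})$, the universal coefficient theorem then provides a Hecke-equivariant isomorphism $M\otimes\mathbb{F}_p\xrightarrow{\sim}H^d(Y_0(\mathfrak{N}),\mathbb{F}_p)$, so after extending scalars to $\overline{\mathbb{F}}_p$ any eigensystem $\theta$ coming from $H^d(Y_0(\mathfrak{N}),\overline{\mathbb{F}}_p)$ already appears on $M\otimes\overline{\mathbb{F}}_p$. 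Similarly, tensoring with $\mathbb{Z}_p$ produces a finitely generated $\mathbb{Z}_p$-module $M_p:=M\otimes\mathbb{Z}_p$ on which the completed Hecke algebra $\mathbb{T}\otimes\mathbb{Z}_p$ acts and whose reduction mod $p$ carries $\theta$.

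To produce the characteristic-zero lift, I would apply the Deligne--Serre lifting lemma to the pair $(\mathbb{T}\otimes\mathbb{Z}_p, M_p)$: any eigensystem $\theta$ on $M_p/pM_p$ lifts, after enlarging $\mathbb{Z}_p$ to the ring of integers $\mathcal{O}$ of a suitable finite extension of $\mathbb{Q}_p$, to an eigensystem $\widetilde{\Theta}\colon \mathbb{T}\to\overline{\mathbb{Q}}_p$ occurring in $M_p\otimes_{\mathbb{Z}_p}\overline{\mathbb{Q}}_p\cong H^d(Y_0(\mathfrak{N}),\overline{\mathbb{Q}}_p)$. Fixing an isomorphism $\overline{\mathbb{Q}}_p\cong\mathbb{C}$ compatible with $\overline{\mathbb{Q}}\hookrightarrow\overline{\mathbb{Q}}_p,\mathbb{C}$ transports $\widetilde{\Theta}$ to an eigensystem $\Theta$ occurring in $H^d(Y_0(\mathfrak{N}),\mathbb{C})$, i.e.\ a weight-two complex eigenform of level $\mathfrak{N}$, and by construction $\Theta$ reduces mod a prime of $\mathcal{O}$ above $p$ to $\theta$.

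The main obstacle I anticipate is bookkeeping rather than conceptual: namely, verifying that the cohomological model of eigenforms adopted in \cite{SS} is genuinely Hecke-equivariantly identified with $H^d$ of the same integral model $M$ in both characteristic $p$ and characteristic zero, so that the universal coefficient theorem and the Deligne--Serre lemma apply uniformly. Once this Hecke-equivariance is in place, taking $B(\mathfrak{N})$ as above (it depends only on the finite torsion of a cohomology group determined by $\mathfrak{N}$) gives the desired bound.
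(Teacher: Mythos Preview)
The paper does not actually prove this statement: it is quoted verbatim as \cite[Proposition 2.1]{SS} and used as a black box. Your proposal is essentially the argument given in \cite{SS} itself --- reduce to integral cohomology of the locally symmetric space, kill the torsion by bounding $p$, and then apply the Deligne--Serre lifting lemma --- so there is nothing to compare beyond noting that you have correctly reconstructed the intended proof. Your identification of the bound $B(\mathfrak{N})$ with the primes appearing in the torsion of $H^d$ and $H^{d+1}$ is exactly what is exploited later in Section~\ref{effres} of the paper, where the authors compute $\Gamma_0(\mathfrak{N}_E)^{\mathrm{ab}}$ (equivalently, the torsion in $H_1$) to make $B(\mathfrak{N})$ explicit for their particular levels.
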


Proposition \ref{lifting of forms} allows us to prove the following result which will play a major role in the proof of Theorem \ref{the theorem describing E'}.

\begin{lemma}\label{curve to form}
    Suppose $K$ satisfies Conjectures \ref{conj1} and \ref{conj2}. There is a constant $V=V(K,A,B,C)$ depending only on $K,A,B$ and $C$ such that whenever $p>V$, the following holds. There is a non-trivial, new, weight two, complex eigenform $\mathfrak{f}$ which has an associated elliptic curve $E'=E_\mathfrak{f}$ of conductor $\mathfrak{N}'$ dividing $\mathfrak{N}_E$ and $\overline{\rho}_{E',p}\sim\overline{\rho}_{E,p}$.
\end{lemma}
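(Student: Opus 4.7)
The plan is to extract $\mathfrak{f}$ from a mod-$p$ eigenform produced by Conjecture~\ref{conj1} applied to $\overline{\rho}_{E,p}$, lift it to characteristic zero via Proposition~\ref{lifting of forms}, pass to the underlying newform, and finally invoke Conjecture~\ref{conj2} to obtain $E'$. The constant $V$ will be defined as the maximum of finitely many bounds accumulated along the way, each depending only on $K, A, B, C$.

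First, I would verify the hypotheses of Conjecture~\ref{conj1} for $\overline{\rho}_{E,p}$. Continuity, the equality $\det \overline{\rho}_{E,p} = \chi_p$, and the finite-flat condition at every prime of $K$ above $p$ are supplied by Proposition~\ref{Conductor of frey} as soon as $p$ exceeds an explicit constant depending only on $K,A,B,C$. Oddness is automatic when $K$ is totally complex (as noted after Conjecture~\ref{conj2}) and otherwise follows from the Weil pairing on $E[p]$. Irreducibility (in fact surjectivity) is given by Corollary~\ref{surjective for p>CK} for $p > D$. Requiring $p$ to be unramified in $K$ amounts to forbidding primes of $T_K$ from lying above $p$, which excludes only finitely many $p$. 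Since the Serre conductor of $\overline{\rho}_{E,p}$ equals $\mathfrak{N}_E$ by Proposition~\ref{Conductor of frey}, Conjecture~\ref{conj1} then produces a weight-two mod-$p$ eigenform $\theta$ over $K$ of level $\mathfrak{N}_E$ with $\theta(T_\mathfrak{q}) = \operatorname{Tr} \overline{\rho}_{E,p}(\operatorname{Frob}_\mathfrak{q})$ for all $\mathfrak{q}\nmid p\mathfrak{N}_E$.

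Next, Proposition~\ref{Conductor of frey} also tells us that $\mathfrak{N}_E$ lies in a finite set of ideals of $\mathcal{O}_K$ depending only on $K,A,B,C$. Taking $V_2$ to be the maximum of the bounds $B(\mathfrak{N})$ of Proposition~\ref{lifting of forms} over this finite set, we conclude that for $p > V_2$ the mod-$p$ form $\theta$ lifts to a complex weight-two eigenform $\mathfrak{g}$ over $K$ of level $\mathfrak{N}_E$. Replacing $\mathfrak{g}$ by the newform $\mathfrak{f}$ it originates from gives a nontrivial, new, weight-two complex eigenform of some level $\mathfrak{N}'$ dividing $\mathfrak{N}_E$; nontriviality is immediate because the residual representation $\overline{\rho}_{\mathfrak{f},p}$ agrees with $\overline{\rho}_{E,p}$ by Chebotarev, and the latter is irreducible (indeed surjective), so its Hecke eigensystem cannot be Eisenstein.

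Finally, I would apply Conjecture~\ref{conj2} to $\mathfrak{f}$. This yields either an elliptic curve $E_\mathfrak{f}/K$ of conductor $\mathfrak{N}'$ whose Frobenius traces match $\mathfrak{f}$ or, in the totally complex case, possibly a fake elliptic curve $A_\mathfrak{f}/K$ of conductor $(\mathfrak{N}')^2$. The main obstacle is ruling out the fake case. For this I would use Lemma~\ref{p divides inertia of frey}: for $p$ large we have $p \mid \#\overline{\rho}_{E,p}(I_\mathfrak{L})$ for some $\mathfrak{L} \in S_K$. A fake elliptic curve, being a QM abelian surface, has everywhere potentially good reduction, so $\#\overline{\rho}_{A_\mathfrak{f},p}(I_\mathfrak{L})$ is bounded by an absolute constant independent of $p$ (by the same style of argument recalled for elliptic curves in \cite[Section 1]{Kraus90}). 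Combined with the isomorphism $\overline{\rho}_{A_\mathfrak{f},p} \sim \overline{\rho}_{E,p}$, this contradicts $p$-divisibility of the inertia image once $p$ is large enough. Hence $E' := E_\mathfrak{f}$ is a genuine elliptic curve, $\overline{\rho}_{E',p} \sim \overline{\rho}_{E,p}$, and taking $V$ to be the maximum of all constants accumulated above completes the proof.
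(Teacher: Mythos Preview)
Your proof is correct and follows the same approach as the paper: verify the hypotheses of Conjecture~\ref{conj1} via Proposition~\ref{Conductor of frey} and Corollary~\ref{surjective for p>CK}, lift the resulting mod-$p$ eigenform using Proposition~\ref{lifting of forms} (exploiting the finiteness of possible levels $\mathfrak{N}_E$), pass to the underlying newform, and invoke Conjecture~\ref{conj2}. The only cosmetic difference is in the elimination of the fake elliptic curve case: the paper cites Lemma~\ref{bullet} (i.e., \cite[Lemma~7.3]{SS}, giving the clean bound $p>24$), whereas you unpack the underlying inertia argument directly---which is precisely the argument the paper itself uses explicitly later in Section~\ref{effres}. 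One minor slip: ``$p$ unramified in $K$'' means $p\nmid\disc(K)$, not that no prime of $T_K$ lies above $p$; but your conclusion that this excludes only finitely many $p$ is of course correct.
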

\begin{proof}
    If we take $p$ large enough, then by Corollary \ref{surjective for p>CK}, $\overline{\rho}_{E,p}$ is surjective and hence absolutely irreducible. According to Proposition \ref{Conductor of frey}, $\overline{\rho}_{E,p}$ satisfies every condition of Conjecture \ref{conj1}. Applying this conjecture gives a weight two mod $p$ eigenform $\theta$ over $K$ of level $\mathfrak{N}_E$, such that for all primes $\mathfrak{q}$ coprime to $p\mathfrak{N}_E$, we have \begin{equation}\text{Tr}(\overline{\rho}_{E,p}(\text{Frob}_\mathfrak{q})) = \theta(T_\mathfrak{q}).\label{trace of rep}\end{equation} From Proposition \ref{Conductor of frey}, we know that there is only a finite amount of values of $\mathfrak{N}_E$ possible. Therefore, by Proposition \ref{lifting of forms} we know that we can take $p$ large enough so that for any of the possible values of $\mathfrak{N}_E$ there is weight two complex eigenform $\mathfrak{f}$ with level $\mathfrak{N}_E$ that is a lift of $\theta$. There are only finitely many such eigenforms $\mathfrak{f}$ and they depend only on $K,A,B$ and $C$, so any constant depending on $\mathfrak{f}$ also only depends on these invariants. \\ 

    Next, we aim to apply Conjecture \ref{conj2}. We have that $\overline{\rho}_{E,p}$ is irreducible, from this it follows that $\mathfrak{f}$ is non-trivial. If $\mathfrak{f}$ is not new then we can replace $\mathfrak{f}$ with an equivalent new eigenform of level $\mathfrak{N}'$ dividing $\mathfrak{N}_E$. Therefore, we may assume that $\mathfrak{f}$ is new of level $\mathfrak{N}'$ dividing $\mathfrak{N}_E$. By applying Conjecture \ref{conj2}, we obtain that $\mathfrak{f}$ is either associated to an elliptic curve $E_\mathfrak{f}/K$ of conductor $\mathfrak{N}'$, or has an associated fake elliptic curve $A_\mathfrak{f}/K$ of conductor $\mathfrak{N}'^2$. By Lemma \ref{bullet} below, we can assume that we are in the former case by taking $p$ sufficiently large. Denote $E'=E_\mathfrak{f}$, then we have $\overline{\rho}_{E,p}\sim\overline{\rho}_{E',p}$. To see this, from Conjecture \ref{conj2}, we have that for all primes $\mathfrak{q}\nmid\mathfrak{N}'$, $$\mathfrak{f}(T_\mathfrak{q}) = 1 + \text{Norm}(\mathfrak{q}) - \# E'(\mathcal{O}_K/\mathfrak{q}).$$
    Reducing this mod $p$, we find $$\text{Tr}(\overline{\rho}_{E',p}(\text{Frob}_\mathfrak{q})) = \theta(T_\mathfrak{q}) \stackrel{\eqref{trace of rep}}{=} \text{Tr}(\overline{\rho}_{E,p}(\text{Frob}_\mathfrak{q})).$$
    Since the set of elements of the form $\text{Frob}_\mathfrak{q}$ with $\mathfrak{q}\nmid p\mathfrak{N}_E$ are a dense subset of $G_K$, it follows that $\text{Tr}(\overline{\rho}_{E,p}) = \text{Tr}(\overline{\rho}_{E',p})$ and since also $\det\overline{\rho}_{E',p} = \chi_p = \det\overline{\rho}_{E,p}$, it follows that $\overline{\rho}_{E',p}\sim \overline{\rho}_{E,p}$.
\end{proof}

\begin{lemma}\cite[Lemma 7.3]{SS}\label{bullet}
    If $p>24$, then $\mathfrak{f}$ has an associated elliptic curve $E_\mathfrak{f}$. 
\end{lemma}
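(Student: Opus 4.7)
The plan is to rule out the fake elliptic curve alternative in Conjecture~\ref{conj2}. Suppose for contradiction that $\mathfrak{f}$ has no associated elliptic curve; then there is a fake elliptic curve $A_\mathfrak{f}/K$ of conductor $\mathfrak{N}'^2$ whose endomorphism algebra contains an order in an indefinite quaternion division algebra $B/\mathbb{Q}$. As in the proof of Lemma~\ref{curve to form}, one extracts from $\mathfrak{f}$ a two-dimensional mod-$p$ Galois representation $\overline{\rho}_\mathfrak{f}$ satisfying $\overline{\rho}_{E,p}\sim\overline{\rho}_\mathfrak{f}$.

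The key rigidity is that the quaternionic multiplication forces $A_\mathfrak{f}$ to have potentially good reduction at every finite prime of $K$. After passing to a quadratic extension $K'/K$ that splits $B$, the abelian surface $A_{\mathfrak{f},K'}$ becomes isogenous to a product $E_0\times E_0$ for some elliptic curve $E_0/K'$ that still has potentially good reduction everywhere, and the restriction $\overline{\rho}_\mathfrak{f}|_{G_{K'}}$ identifies with $\overline{\rho}_{E_0,p}$. The bound on the inertial image of an elliptic curve with potentially good reduction, already used in the proof of Lemma~\ref{p divides image of inertia} via \cite{Kraus90}, then forces the order of $\overline{\rho}_{E_0,p}(I_{\mathfrak{q}'})$ to divide $24$ for every prime $\mathfrak{q}'\nmid p$ of $K'$.

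To extract a contradiction, invoke Lemma~\ref{p divides inertia of frey}: for every $\mathfrak{L}\in S_K$ one has $p\mid\#\overline{\rho}_{E,p}(I_\mathfrak{L})$, and via the equivalence $\overline{\rho}_{E,p}\sim\overline{\rho}_\mathfrak{f}$ this becomes $p\mid\#\overline{\rho}_\mathfrak{f}(I_\mathfrak{L})$. Restricting to the inertia $I_{\mathfrak{L}'}$ at a prime $\mathfrak{L}'\mid\mathfrak{L}$ of $K'$ — an index at most $2$ subgroup of $I_\mathfrak{L}$ — preserves divisibility by $p$ because $p>2$, so $p$ must divide a group order bounded by $24$. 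This contradicts $p>24$, ruling out the fake case and forcing $\mathfrak{f}$ to correspond to an honest elliptic curve $E_\mathfrak{f}/K$.

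The main obstacle is the clean identification of $\overline{\rho}_\mathfrak{f}|_{G_{K'}}$ with $\overline{\rho}_{E_0,p}$ for an actual elliptic curve $E_0/K'$: one must analyze the Tate module of $A_\mathfrak{f}$ after tensoring with $B\otimes\mathbb{Q}_p$ (which is split for all $p$ coprime to $\disc(B)$, automatic for $p$ beyond the finitely many ramified primes of the finitely many possible $B$'s), extract the rank-one summand cut out by $\mathfrak{f}$, and verify that potentially good reduction passes to the elliptic factor. Once this structural step is in place, the remaining bookkeeping with indices and divisibilities by $p$ is routine.
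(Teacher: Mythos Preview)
The paper provides no proof of this lemma; it is quoted directly from \cite[Lemma~7.3]{SS}. Your sketch reproduces the idea behind that result and is essentially correct: rule out the fake elliptic curve alternative by exploiting that QM abelian surfaces have potentially good reduction everywhere, so the inertia image at any prime $\nmid p$ in the associated two-dimensional representation has order at most $24$, while Lemma~\ref{p divides inertia of frey} forces $p\mid\#\overline{\rho}_{E,p}(I_\mathfrak{L})$ for $\mathfrak{L}\in S_K$. The paper itself runs this exact argument in Section~\ref{effres} when eliminating a specific form over $\mathbb{Q}(\sqrt{-19})$, invoking \cite[Theorem~4.2]{SS} directly for the inertia bound $\#\overline{\rho}_{A_{\mathfrak{f}},p}(I_\lambda)\le 24$.

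Your detour through a splitting field $K'/K$ and an elliptic factor $E_0$ is a valid way to rederive that bound, but it is more work than needed: the identification you flag as the ``main obstacle'' is precisely what \cite[Theorem~4.2]{SS} packages for you. Two minor remarks. First, your appeal to Lemma~\ref{p divides inertia of frey} requires $p$ larger than a constant depending on $A,B,C,K$, not merely $24$; this is harmless since the lemma is only applied inside Lemma~\ref{curve to form}, where $p$ is already taken sufficiently large. Second, the side condition that $B\otimes\mathbb{Q}_p$ be split is not actually needed: Conjecture~\ref{conj2} gives $\overline{\rho}_{E,p}^{\oplus 2}$ as the semisimplification of $A_{\mathfrak{f}}[p]$ regardless, and any subquotient of a representation with bounded inertia image inherits that bound.
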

The following lemma will allow us to take $p$ large enough so that $E'$ has a $K$-rational point of order 3. 
\begin{lemma}\label{that good}
    If $E'$ as in Lemma \ref{curve to form} does not have a non-trivial $K$-rational point of order $3$ and is not isogenous to an elliptic curve with a non-trivial $K$-rational point of order $3$, then $p<C_{E'}$ where $C_{E'}$ is a constant depending only on $E'$. 
\end{lemma}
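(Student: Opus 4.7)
The plan is to leverage the $K$-rational $3$-torsion point on the Frey curve $E$ to force $\overline{\rho}_{E',3}$ to be reducible, contradicting the hypothesis.

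Because $E$ has a $K$-rational point of order $3$ by Proposition~\ref{Conductor of frey}, the semisimplification of $\overline{\rho}_{E,3}$ is $\mathbf{1}\oplus\chi_3$, so for every prime $\mathfrak{q}$ of $K$ of good reduction for $E$ and coprime to $3$ one has
\[
a_\mathfrak{q}(E) \equiv 1 + \nr(\mathfrak{q}) \pmod 3.
\]
I would then fix a finite set $\Sigma$ of primes of $K$, disjoint from $T_K$ and coprime to $3$, of good reduction for $E'$, large enough that $\{\Frob_\mathfrak{q} : \mathfrak{q}\in\Sigma\}$ meets every conjugacy class of the finite image $\overline{\rho}_{E',3}(G_K)\subseteq\GL_2(\mathbb{F}_3)$; by the Chebotarev density theorem such a $\Sigma$ exists and depends only on $E'$ and $K$. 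Setting $M=\max_{\mathfrak{q}\in\Sigma}\nr(\mathfrak{q})$, I would take $C_{E'}$ larger than $1+M+2\sqrt{M}$ together with the $p$-thresholds ensuring the conclusions of Proposition~\ref{Conductor of frey}, and assume $p>C_{E'}$.

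The first task is to show that each $\mathfrak{q}\in\Sigma$ has good reduction for $E$. Since $E$ is semistable outside $T_K$ and $\mathfrak{q}\notin T_K$, the only alternative is multiplicative reduction; there $v_\mathfrak{q}(j_E)$ is divisible by $p$ (it has the shape $-p\cdot v_\mathfrak{q}(ab^3)$ at such $\mathfrak{q}$), so $\overline{\rho}_{E,p}$ is unramified at $\mathfrak{q}$ and $\mathrm{Tr}(\overline{\rho}_{E,p}(\Frob_\mathfrak{q}))\equiv\pm(1+\nr(\mathfrak{q}))\pmod p$. Combined with $\overline{\rho}_{E,p}\sim\overline{\rho}_{E',p}$ and the Hasse bound $|a_\mathfrak{q}(E')|\leq 2\sqrt{\nr(\mathfrak{q})}$, this produces a nonzero integer of absolute value at most $1+M+2\sqrt{M}$ divisible by $p$, which is impossible for $p>C_{E'}$. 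Hence $\mathfrak{q}$ has good reduction for $E$, the congruence $a_\mathfrak{q}(E)\equiv a_\mathfrak{q}(E')\pmod p$ combined with Hasse forces $a_\mathfrak{q}(E)=a_\mathfrak{q}(E')$, and the mod-$3$ observation then yields $a_\mathfrak{q}(E')\equiv 1+\nr(\mathfrak{q})\pmod 3$ for every $\mathfrak{q}\in\Sigma$. By choice of $\Sigma$ and Chebotarev, $\mathrm{Tr}(\overline{\rho}_{E',3})=\mathrm{Tr}(\mathbf{1}\oplus\chi_3)$ on all of $G_K$, and Brauer--Nesbitt gives $\overline{\rho}_{E',3}^{ss}\cong\mathbf{1}\oplus\chi_3$.

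Therefore $\overline{\rho}_{E',3}$ is reducible: either $\mathbf{1}$ appears as a subrepresentation, giving a $K$-rational point of order $3$ on $E'$, or $\chi_3$ does, giving a $K$-rational subgroup $C\subset E'[3]$ isomorphic to $\mu_3$; in that latter case the kernel of the dual of $E'\to E'/C$ has trivial Galois action, so $E'/C$ has a $K$-rational point of order $3$. In either case $E'$ is $K$-isogenous to a curve with a $K$-rational point of order $3$, contradicting the hypothesis. Hence $p\leq C_{E'}$. The main delicate point is arranging the set $\Sigma$ to depend only on $E'$ while simultaneously controlling the reduction type of the varying curve $E$ at each $\mathfrak{q}\in\Sigma$; once this is set up, $C_{E'}$ is explicit in $M$.
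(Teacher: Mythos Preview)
Your argument is correct, but it follows a different route from the paper's. The paper invokes Katz's theorem \cite[Theorem~2]{Katz}: under the hypothesis on $E'$, there exist infinitely many primes $\mathfrak{P}$ with $3\nmid\#E'(\calO_K/\mathfrak{P})$. Fixing one such $\mathfrak{P}\notin T_K$ and comparing $\#E(\calO_K/\mathfrak{P})\equiv\#E'(\calO_K/\mathfrak{P})\pmod p$ (or the multiplicative-reduction variant) immediately produces a nonzero integer of bounded size divisible by $p$, since $3\mid\#E(\calO_K/\mathfrak{P})$ while $3\nmid\#E'(\calO_K/\mathfrak{P})$. So a single well-chosen prime suffices.

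Your approach instead reconstructs the full mod-$3$ semisimplification of $E'$ from that of $E$: you pick a Chebotarev set $\Sigma$ covering the conjugacy classes of $\overline{\rho}_{E',3}(G_K)$, force $a_\mathfrak{q}(E)=a_\mathfrak{q}(E')$ for $\mathfrak{q}\in\Sigma$ via the mod-$p$ isomorphism and Hasse, transport the congruence $a_\mathfrak{q}\equiv 1+\nr(\mathfrak{q})\pmod 3$ from $E$ to $E'$, and conclude $\overline{\rho}_{E',3}^{ss}\cong\mathbf{1}\oplus\chi_3$ by Brauer--Nesbitt (this works because $\chi_3=\det\overline{\rho}_{E',3}$ already factors through the image of $\overline{\rho}_{E',3}$). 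This is more elementary in that it avoids Katz's result and yields the contrapositive directly, at the cost of handling a finite set of primes rather than one. Both approaches give an effective $C_{E'}$; the paper's is shorter, yours is more self-contained.
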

\begin{proof}
    By \cite[Theorem 2]{Katz}, there are infinitely many primes $\mathfrak{P}$ such that $\#E'(\mathcal{O}_K/\mathfrak{P})\not\equiv 0 \text{ mod } 3$. Fix such a prime $\mathfrak{P}\not\in T_K$.  The conductor of $E'$ is supported on the primes in $T_K$ by Proposition \ref{Conductor of frey}. Therefore $E'$ is semistable at $\mathfrak{P}$. Suppose that $E'$ has good reduction at $\mathfrak{P}$. Then since $\mathfrak{P}\nmid\mathfrak{N}'$, we have $\text{Tr}(\overline{\rho}_{E,p}(\text{Frob}_\mathfrak{P})) \equiv \text{Tr}(\overline{\rho}_{E',p}(\text{Frob}_\mathfrak{P}))\text{ mod } p$, or equivalently, $\#E(\mathcal{O}_K/\mathfrak{P}) \equiv \#E'(\mathcal{O}_K/\mathfrak{P})\text{ mod } p$. Since $3\mid\# E(\mathcal{O}_K/\mathfrak{P})$, the difference is nonzero. Since the difference is divisible by $p$, it belongs to a finite set. This gives a bound on $p$. If $E$ has multiplicative reduction at $\mathfrak{P}$, then we have $$\pm(\text{Norm}(\mathfrak{q} + 1)) \equiv a_\mathfrak{P}(E') \mod p.$$ By comparing the traces of Frobenius, we get that the difference belongs to a bounded set which gives a bound on $p$. 
\end{proof}
With all the ingredients gathered, we can now prove Theorem \ref{the theorem describing E'}.

\begin{proof}[\textbf{Proof of Theorem \ref{the theorem describing E'}}]
    By assuring that $p$ is large enough, we can invoke Lemma \ref{curve to form} to get an elliptic curve $E' = E_\mathfrak{f}$. It remains to show that $E'$ satisfies \textit{(i)}-\textit{(iv)}. The elliptic curve $E'$ has conductor $\mathfrak{N}'$ dividing $\mathfrak{N}_E$, which is supported on the primes in $T_K$. Thus, $E'$ has good reduction outside of $T_K$ giving \textit{(i)}. Suppose that $E'$ does not have a $K$-rational point of order $3$ and is not 3-isogenous to an elliptic curve with a $K$-rational point of order $3$. Then by Lemma \ref{that good}, $p<C_{E'}$. Therefore, by taking $p$ large enough and replacing $V$ by $\max\{V,C_{E'}\}$ it follows that either $E'$ has a $K$-rational point of order $3$ or $E'$ is 3-isogenous to an elliptic curve $E''$ with a $K$-rational point of order $3$. In the latter case, we have that for every prime $\ell\neq3$, the isogeny induces an isomorphism $E'[\ell]\cong E''[\ell]$ so $\overline{\rho}_{E',p}\sim \overline{\rho}_{E'',p}$ and since $\overline{\rho}_{E,p}\sim\overline{\rho}_{E',p}$ we obtain \textit{(ii)} and \textit{(iii)} after possibly replacing $E'$ by $E''$. To show that \textit{(iv)} holds, let $\mathfrak{P}\in S_K$. As we have $\overline{\rho}_{E,p}\sim\overline{\rho}_{E',p}$, we also have  $\#\overline{\rho}_{E,p}(I_\mathfrak{P})=\#\overline{\rho}_{E',p}(I_\mathfrak{P})$ and  it follows that $p\mid\#\overline{\rho}_{E',p}(I_\mathfrak{P})$ from Lemma \ref{p divides inertia of frey} and  we obtain $v_\mathfrak{P}(j_{E'}) < 0$ by Lemma \ref{p divides image of inertia}. This gives \textit{(iv)} and concludes the proof.
\end{proof}

    \subsection {Irreducibility and absolute irreducibility of Galois Representations}
    Throughout this section $K=\Q(\sqrt{-d})$, where $d \in \{7,19,43,67\}$, $(a,b,c) \in \mathcal O_K^3$ is a non-trivial, putative solution of the equation $Aa^p+Bb^p=c^3$ such that $(Aa,Bb,c)$ is primitive. 

	The idea of this section is to prove that when $p$ is bigger than an explicit constant, then the mod $p$ Galois representation $\overline{\rho}_{E,p}:G_\Q \rightarrow \Aut(E[p]) \cong \GL_2(\F_p)$ attached to $E$ is absolutely irreducible.

	With the above assumptions, the following theorem is a simpler version of  \cite[Proposition 3.9]{IKO2} which is based on \cite[Lemma 6.3]{FSANT}. Therefore, we skip its proof and refer the reader to \cite{IKO2}.
    
	\begin{proposition}\label{irrCase2}
		Let $E/K$ be the Frey curve attached to a putative solution to Equation \ref{maineqn} where $\lambda \mid  Bb$  and let $p > 20$ be a prime. Then $\overline{\rho}_{E,p}$  is irreducible. 
	\end{proposition}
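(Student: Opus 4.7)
\noindent\emph{Proof proposal.} The plan is to argue by contradiction. Suppose $\overline{\rho}_{E,p}$ is reducible, so that in a suitable basis
$$\overline{\rho}_{E,p} \sim \begin{pmatrix} \theta & * \\ 0 & \theta' \end{pmatrix},$$
where $\theta, \theta' : G_K \to \F_p^{\times}$ are isogeny characters with $\theta\theta' = \chi_p$ (the determinant is $\chi_p$ by Lemma \ref{semist}).

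First I would pin down the conductors of $\theta$ and $\theta'$. The hypothesis $\lambda \mid Bb$ places us in case (1) of Lemma \ref{semist}, so $v_\lambda(\mathcal{N}_E) \in \{0,1\}$; together with the fact that at every prime $\mathfrak{q}\neq \lambda$ dividing $ABab$ the curve $E$ has multiplicative reduction with $v_\mathfrak{q}(\mathcal{N}_E) = 1$, this shows that $E$ is semistable at every $\mathfrak{q} \nmid p$. Lemma \ref{FSlemma} then forces $v_\mathfrak{q}(\mathcal{N}_\theta) = v_\mathfrak{q}(\mathcal{N}_{\theta'}) = 0$ for all $\mathfrak{q}\nmid p$, so both isogeny characters are unramified away from the primes above $p$.

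Next I would exploit the $K$-rational point of order $3$ on $E$ (also from Lemma \ref{semist}), which yields a $G_K$-stable line in $E[3]$ and hence a mod $3$ isogeny character $\phi$ satisfying $\phi\phi' = \overline{\chi}_3$. For the specific fields $K = \Q(\sqrt{-d})$ with $d \in \{7,19,43,67\}$, all of which have class number one (and narrow class number one, being imaginary quadratic), any character $G_K \to \F_p^{\times}$ unramified outside $p$ factors through a ray class group of conductor a power of $p$ and is pinned down by its restriction to inertia at the primes above $p$. A Serre-style analysis of isogeny characters, as carried out in Proposition 3.9 of \cite{IKO2}, then shows that $\theta^{12}$ is a power of $\chi_p$; combined with $\theta\theta' = \chi_p$ and the mod $3$ data, the possibilities for $\theta$ reduce to a short list that is independent of $p$.

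To close the argument I would pick an auxiliary prime $\mathfrak{q}_0$ of small norm, coprime to $3pAB$, and compare traces of Frobenius: $a_{\mathfrak{q}_0}(E) \equiv \theta(\Frob_{\mathfrak{q}_0}) + \theta'(\Frob_{\mathfrak{q}_0}) \pmod{p}$. The Hasse bound $|a_{\mathfrak{q}_0}(E)| \leqslant 2\sqrt{\nr(\mathfrak{q}_0)}$, together with the explicit values the right-hand side can take, forces a fixed nonzero integer to be divisible by $p$, contradicting $p > 20$. The main obstacle is the character-inventory step: for each of the four fields one must verify that the candidate list for $\theta$ is short enough that a single small auxiliary prime eliminates every possibility uniformly for all $p > 20$. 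This is precisely the technical content of Proposition 3.9 of \cite{IKO2}, which the present authors invoke.
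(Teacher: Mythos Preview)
Your outline is correct and aligns with the paper's approach: the authors do not give an independent proof but state that Proposition \ref{irrCase2} is a simpler version of Proposition 3.9 of \cite{IKO2} and refer the reader there, and your sketch (isogeny characters unramified outside $p$ via Lemma \ref{FSlemma} and semistability, classification of $\theta$ via class-number-one plus the local shape at $p$, then elimination by an auxiliary small prime) is precisely that strategy. One small imprecision worth fixing: invoking case (1) of Lemma \ref{semist} needs $v_\lambda(Bb^p) > 2e$, not just $\lambda \mid Bb$; in the intended setting $\lambda \mid b$ and $3\nmid B$, so $v_\lambda(Bb^p)\geqslant p>2$ and the issue disappears.
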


    In order to apply Conjecture \ref{conj1}, we need absolute irreducibility of the Galois representation. The following result ensures that we have this.
    
	\begin{cor}\label{absirr} \begin{enumerate}
			\item Let $p$ be odd. If $\lambda \mid  ABab$ and $\overline{\rho}_{E,p}$ is irreducible, then $\overline{\rho}_{E,p}$ is absolutely irreducible. 
			
			\item Let $p$ be odd. If $K$ is totally real, then $\overline{\rho}_{E,p}$ is irreducible if and only if it is absolutely irreducible. 
		\end{enumerate}
		\end{cor}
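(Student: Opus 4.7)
The plan is to handle the two parts separately, each reducing to a structural fact about subgroups of $\GL_2(\mathbb F_p)$.

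For part (1), I would apply the first bullet of Theorem \ref{subgroups} once I have arranged that $p \mid \#\overline{\rho}_{E,p}(G_K)$. By the primitivity of $(a,b,c)$ and the coprimality of $A$ and $B$, the hypothesis $\lambda \mid ABab$ forces exactly one of $Aa$ or $Bb$ to be divisible by $\lambda$, and up to the symmetric role I may assume $\lambda \mid Bb$. Lemma \ref{semist} then gives that the Frey curve $E$ has multiplicative reduction at $\lambda$ for $p$ large. Since $\lambda$ is the only prime of $K$ above $3$ and $\lambda \mid b$, the triple $(a,b,c)$ lies in $W_K$, so Lemma \ref{p divides inertia of frey} applies and yields $p \mid \#\overline{\rho}_{E,p}(I_\lambda)$ once $p$ exceeds the explicit threshold recorded there. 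In particular $p$ divides the order of the full image, so Theorem \ref{subgroups} forces the image to be either reducible or to contain $\SL_2(\mathbb F_p)$. The assumption of irreducibility rules out the first case, and since $\SL_2(\mathbb F_p)$ acts absolutely irreducibly on $\mathbb F_p^{\,2}$ the conclusion follows.

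For part (2), the direction ``absolutely irreducible $\Rightarrow$ irreducible'' is immediate, so I would argue the converse by contradiction. Assume $\overline{\rho}_{E,p}$ is irreducible over $\mathbb F_p$ but reducible over $\overline{\mathbb F}_p$. A $\overline{\rho}_{E,p}$-invariant line over $\overline{\mathbb F}_p$ cannot be unique, because it would then be preserved by $\Gal(\overline{\mathbb F}_p/\mathbb F_p)$ and descend to $\mathbb F_p$. Hence there are at least two invariant lines, the image of $\overline{\rho}_{E,p}$ lies in a maximal torus, and because the lines are not $\mathbb F_p$-rational this torus is a non-split Cartan subgroup $C \cong \mathbb F_{p^2}^{\,\times}\subseteq \GL_2(\mathbb F_p)$. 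Base changed to $\overline{\mathbb F}_p$ the representation therefore decomposes as $\theta \oplus \theta^{(p)}$, where $\theta\colon G_K \to \mathbb F_{p^2}^{\,\times}$ and $\theta^{(p)}(g) := \theta(g)^p$, which gives
\[
\det \overline{\rho}_{E,p} \;=\; \theta \cdot \theta^{(p)} \;=\; N_{\mathbb F_{p^2}/\mathbb F_p} \circ \theta.
\]

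Now fix a complex conjugation $c \in G_K$ attached to any real embedding of $K$; this is where I use that $K$ is totally real. From $c^2 = 1$ we get $\theta(c)^2 = 1$, so $\theta(c) \in \{\pm 1\} \subseteq \mathbb F_p$, hence $\theta^{(p)}(c) = \theta(c)^p = \theta(c)$ and $\det\overline{\rho}_{E,p}(c) = \theta(c)^2 = 1$. On the other hand $\det \overline{\rho}_{E,p} = \chi_p$, the mod-$p$ cyclotomic character, and $\chi_p(c) = -1$ because $p$ is odd, contradicting the previous equality. The main obstacle is really only bookkeeping in part (1): one must verify that the specific thresholds produced by Lemmas \ref{semist}, \ref{p divides image of inertia} and \ref{p divides inertia of frey} are comfortably absorbed by the constant $B_K$ of Theorem \ref{thm:sample}. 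Part (2) is more abstract, and its only delicate step is the structural observation that $\mathbb F_p$-irreducibility together with $\overline{\mathbb F}_p$-reducibility forces the image into a non-split Cartan, which is standard linear algebra.
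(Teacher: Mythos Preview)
Your proposal is correct and follows essentially the same strategy as the paper. For part~(1), the paper computes $v_\lambda(j_E)$ directly and invokes the Tate curve to produce a unipotent element of order $p$ in the image of inertia, then applies Theorem~\ref{subgroups}; you instead cite the packaged Lemma~\ref{p divides inertia of frey} to get $p\mid\#\overline{\rho}_{E,p}(I_\lambda)$, which amounts to the same thing. For part~(2), the paper simply observes that complex conjugation maps to a matrix conjugate to $\left(\begin{smallmatrix}1&0\\0&-1\end{smallmatrix}\right)$ and asserts the conclusion; your argument unwinds this by noting that $\mathbb F_p$-irreducibility plus $\overline{\mathbb F}_p$-reducibility forces the image into a non-split Cartan and then derives the contradiction $\det\overline{\rho}_{E,p}(c)=1\neq -1=\chi_p(c)$ from $\theta(c)\in\{\pm1\}$. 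This is a more explicit version of the same observation: an element with distinct $\mathbb F_p$-rational eigenvalues cannot lie in a non-split Cartan, and your determinant computation is one clean way to see it.
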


	\begin{proof} 
		\begin{enumerate}
			\item 
			Let  $(a,b,c)\in \mathcal O_K^3$ be a putative solution  of $Ax^p+By^p=z^3$ such that  $(Aa,Bb,c)$ is primitive. Then the Frey curve $E$ attached to such a solution is semistable when $\lambda \mid  ABab$ where $\lambda$ is the prime of $\mathcal O_K$ lying over $3$. These were discussed in Lemma \ref{semist}. By Proposition \ref{irrCase2} we know that $\overline{\rho}_{E,p}$ is irreducible when $p$ is big enough. In Proposition \ref{irrCase2} we make this bound explicit. By Lemma \ref{p divides image of inertia} the inertia group $I_{\lambda}$ contains an element of order $p$. By Theorem \ref{subgroups}, the image $\overline{\rho}_{E,p}$  contains $\SL_2(\mathbb F_p)$ and is therefore an absolutely irreducible group of $\GL_2(\mathbb F_p)$.

			\item When $K$ is totally real, the absolute Galois group $G_K$ contains a complex conjugation. The image of this complex conjugation under $\overline{\rho}_{E,p}$ is similar to $\begin{pmatrix}
			1 & 0 \\
			0 & -1
			\end{pmatrix}$ which implies that if $\overline{\rho}_{E,p}$ is irreducible, then it is absolutely irreducible.  
		\end{enumerate}
	\end{proof}

\section{Asymptotic Results}\label{T-unit section}
\subsection{Proof of Theorem \ref{main theorem}}
In this subsection we prove Theorem \ref{main theorem} by using the level lowered curve $E'$ to arrive at a contradiction. This is done using $S$-units. First we establish some terminology. Let $S$ be a finite set of primes. We say that an ideal $I\subset\mathcal{O}_K$ is an $S$-ideal if it is only divisible by ideals in $S$. We denote by $\mathcal{O}_S$ the ring of $S$-integers and by $\mathcal{O}_S^\times$ the set of $S$-units. The following reasoning is essentially the same as in \cite[Section 4.4]{moc22} and \cite[Section 2.5]{Kumar}. \\  

\begin{proof}[\textbf{Proof of Theorem \ref{main theorem}}]Let $(a,b,c)\in W_K$ be a solution to Equation \ref{maineqn}. Then by Theorem \ref{the theorem describing E'} there is an elliptic curve $E'/K$ such that $E'$ has a $K$-rational point of order $3$. Then $E'$ has a model of the form 
$$E'\colon y^2 + exy + dy = x^3$$ for some $d,e\in K$ with $j$-invariant $$j_{E'} = \frac{e^3(e^3-24d)^3}{d^3(e^3-27d)}.$$ By Theorem \ref{the theorem describing E'}, $E'$ has good reduction away from $T_K$ which implies that $j_{E'}\in\mathcal{O}_{T_K}$. Set $\lambda=\tfrac{e^3}{d}$ and $\mu= \lambda - 27$. Then 
\begin{equation}
  j_{E'} = \frac{\lambda(\lambda-24)^3}{\lambda-27} = \frac{(\mu+27)(\mu+3)^3}{\mu}  = \mu^{3}(1+27\mu^{-1})(1+3\mu^{-1})^3.\label{jE'}
\end{equation}
Rearranging the equations above and using the fact that $j_{E'}\in\mathcal{O}_{T_K}$, we see that $\lambda$ and $\mu$ satisfy monic polynomials with coefficients in $\mathcal{O}_{T_K}$. Since $\mathcal{O}_{T_K}$ is integrally closed, it follows that $\lambda$ and $\mu$ are elements in $\mathcal{O}_{T_K}$. Using the right hand side of \eqref{jE'} we then see that $\mu^{-1}$ also satisfies a monic polynomial with coefficients in $\mathcal{O}_{T_K}$. It then follows that $\mu^{-1}\in \mathcal{O}_{T_K}$ and hence $\mu\in\mathcal{O}_{T_K}^\times$. By Lemma 17.(ii) of \cite{moc22}, the prinicipal ideal $(\lambda)$ is equal to $I^3J$ for some fractional ideal $I$ and $T_K$-ideal $J$. Since $J$ is a $T_K$-ideal, we have $[I]^3 = 1$ in $\text{Cl}_{T_K}(K)$. By assumption, $\text{Cl}_{T_K}(K)$ has no $3$-torsion so we find that $I = \gamma\tilde{I}$ for some $T_K$-ideal $\tilde{I}$ and $\gamma\in\mathcal{O}_K$. So $$(\lambda) = \gamma^3\tilde{I}J \quad\Leftrightarrow\quad \Big(\frac{\lambda}{\gamma^3}\Big) = \tilde{I}J.$$
The right hand side of the latter is a $T_K$-ideal so it follows that $u\coloneqq\lambda/\gamma^3\in\mathcal{O}_{T_K}^\times$. Recall that we have $\mu+27 = \lambda$, dividing this equation by $u$ gives $$\alpha + \beta = \gamma^3$$ where $\alpha = \mu/u$ and $\beta = 27/\mu$ which are both elements of $\mathcal{O}_{T_K}^\times$. By assumption, there is some $\mathfrak{L}\in S_K$ such that $$|v_\mathfrak{L}(\tfrac{\mu}{27})| = |v_\mathfrak{L}(\alpha\beta^{-1})| \leqslant 3v_\mathfrak{L}(3).$$ This is equivalent to saying that $0\leqslant v_\mathfrak{L}(\mu)\leqslant 6v_\mathfrak{L}(3)$. We show that these bounds on $v_\mathfrak{L}(\mu)$ imply that $v_\mathfrak{L}(j_{E'})\geqslant 0$. From the expression in \eqref{jE'} in $\mu$, we find that
\begin{equation}
    v_\mathfrak{L}(j_{E'}) = v_\mathfrak{L}(\mu+27) + 3v_\mathfrak{L}(\mu+3) - v_\mathfrak{L}(\mu)\label{vLj}.
\end{equation}

We distinguish three cases. \\ 

First suppose that $0\leqslant v_\mathfrak{L}(\mu)\leqslant v_\mathfrak{L}(3)$. Then $v_\mathfrak{L}(\mu+27) = v_\mathfrak{L}(\mu)$ and $v_\mathfrak{L}(\mu+3)\geqslant v_\mathfrak{L}(\mu)$. Then \eqref{vLj} implies that $v_\mathfrak{L}(j_{E'})\geqslant 0$. \\ 

If $v_\mathfrak{L}(3) < v_\mathfrak{L}(\mu) \leqslant 3v_\mathfrak{L}(3)$, then $v_\mathfrak{L}(\mu+27) \geqslant v_\mathfrak{L}(\mu)>v_\mathfrak{L}(3)$ and $v_\mathfrak{L}(\mu+3) = v_\mathfrak{L}(3)$. Then \eqref{vLj} implies that $v_\mathfrak{L}(j_{E'})> 0$. \\

Finally, if $3v_\mathfrak{L}(3) < v_\mathfrak{L}(\mu) \leqslant 6v_\mathfrak{L}(3)$, then $v_\mathfrak{L}(\mu+27) = 3v_\mathfrak{L}(3)$ and $v_\mathfrak{L}(\mu+3) = v_\mathfrak{L}(3)$. Then $v_\mathfrak{L}(j_{E'}) = 6v_\mathfrak{L}(3) - v_\mathfrak{L}(\mu) \geqslant 0$. In all cases, this contradicts with Theorem \ref{the theorem describing E'}.(iv). \end{proof} 

\subsection{Proof of Theorem \ref{theorem: special case}}
As in \cite{moc22}, under some stricter conditions on $K$, $A$, $B$ and $C$, we may replace the $S$-unit equation in the statement of Theorem \ref{main theorem} by a simpler one. The proof of this theorem is again similar to that of \cite[Theorem 11]{moc22} and \cite[Proposition 2.7]{Kumar}. 

\begin{proof} [\textbf{Proof of Theorem \ref{theorem: special case}}]
    With notation as in Theorem \ref{theorem: special case}, we have $T_K = \{\mathfrak{q}\}$. Note that since $3\nmid h_K$, by Theorem \ref{main theorem}, it suffices to show that for every solution $(\alpha,\beta,\gamma)\in \mathcal{O}_{S_K}^\times\times\mathcal{O}_{S_K}^\times\times\mathcal{O}_{S_K}$ to $\alpha+\beta = \gamma^3$ satisfies $|v_\mathfrak{q}(\alpha\beta^{-1})|\leqslant 3v_\mathfrak{q}(3)$. Let $(\alpha,\beta,\gamma)$ be such a solution. Scale this solution by cubic powers (this is where the assumption that $\mathfrak{q}$ is principal comes in) and swap $\alpha$ and $\beta$ if necessary so that $v_\mathfrak{q}(\beta) = 0,1$ or $2$ and $0\leqslant v_\mathfrak{q}(\beta)\leqslant v_\mathfrak{q}(\alpha)$. By scaling with $-1$, if necessary, we may also assume $\beta$ to be positive. We consider several cases.
    
    \textbf{Case 1:} Suppose that $v_\mathfrak{q}(\beta) = 1$ or $2$. If $v_\mathfrak{q}(\alpha)\neq v_\mathfrak{q}(\beta)$, then $v_\mathfrak{q}(\alpha)>v_\mathfrak{q}(\beta)$ and $$v_\mathfrak{q}(\gamma^3) = v_\mathfrak{q}(\alpha+\beta) = v_\mathfrak{q}(\beta)$$
    which is a contradiction since $v_\mathfrak{q}(\beta)$ is not a multiple of $3$. Therefore, $v_\mathfrak{q}(\alpha) = v_\mathfrak{q}(\beta)$. It follows that $|v_\mathfrak{q}(\alpha\beta^{-1})| = 0\leqslant 3v_\mathfrak{q}(3)$.

    \textbf{Case 2:} Suppose that $v_\mathfrak{q}(\beta) = 0$ and that $\beta$ is not a cube in $K$. Suppose for a contradiction that $v_\mathfrak{q}(\alpha) > 3v_\mathfrak{q}(3)$. Let $L = K(\zeta_3)$ and consider the extension $L(\sqrt[3]{\beta})/L$. Let $\mathfrak{Q}$ be a prime above $\mathfrak{q}$ in $L$. We show that this extension is unramified above $\mathfrak{Q}$ by showing that $\beta$ is a cube in the $\mathfrak{q}$-adic completion $K_\mathfrak{q}$ of $K$. To do this, we use a general version of Hensel's Lemma \cite[Lemma 9.16]{MITHensel}. Let $f = X^3 - \beta \in K[X]$. Since $v_\mathfrak{q}(\alpha) > 3v_\mathfrak{q}(3)$ and $v_\mathfrak{q}(\beta) = 0$ it follows that $\gamma^3 = \alpha + \beta \equiv \beta \mod 3^3$. Thus, $v_\mathfrak{q}(f(\gamma)) \geqslant 3v_\mathfrak{q}(3)$. We also have $3v_\mathfrak{q}(\gamma) = \min\{v_\mathfrak{q}(\alpha),v_\mathfrak{q}(\beta)\} = 0$. From this it follows that $v_\mathfrak{q}(f'(\gamma)) = v_\mathfrak{q}(3) + 2v_\mathfrak{q}(\gamma) = v_\mathfrak{q}(3)$. We have $$v_\mathfrak{q}(f(\gamma)) \geqslant 3v_\mathfrak{q}(3) > 2v_\mathfrak{q}(3) = 2v_\mathfrak{q}(f'(\gamma)).$$ From Hensel's Lemma \cite[Lemma 9.16]{MITHensel} it follows that $f$ has a root in $K_\mathfrak{q}$. In other words, $\beta$ is a cube in $K_\mathfrak{q}$. In particular, $\beta$ is also a cube in the $\mathfrak{Q}$-adic completion $L_\mathfrak{Q}\supset K_\mathfrak{q}$ of $L$. From this it follows that $L_\mathfrak{Q}(\sqrt[3]{\beta})/L_\mathfrak{Q}$ is the trivial extension and hence unramified. Therefore, since $\mathfrak{Q}$ was taken to be an arbitrary prime above $\mathfrak{q}$, it follows that $L(\sqrt[3]{\beta})/L$ is unramified above every prime in $L$ dividing $\mathfrak{q}$. The other primes where $L(\sqrt[3]{\beta})/L$ may ramify are the primes dividing $\beta$. However, since $\beta\in\mathcal{O}_{S_K}^\times$, it follows that $\beta$ is supported on the primes above $\mathfrak{q}$. Hence, $L(\sqrt[3]{\beta})/L$ is unramified above all places (also the infinite ones since $L$ has no real embeddings). Further, the extension $L(\sqrt[3]{\beta})/L$ has degree $3$, is Galois, and has a cyclic Galois group. It follows that $3\mid h_L$ which contradicts with our assumption. Therefore, $v_\mathfrak{q}(\alpha) \leqslant 3v_\mathfrak{q}(3)$ and $|v(\alpha\beta^{-1})| = v_\mathfrak{q}(\alpha) \leqslant 3v_\mathfrak{q}(3)$. 

    \textbf{Case 3:} Suppose that $v_\mathfrak{q}(\beta) = 0$ and $\beta$ is a cube. By dividing the equation $\alpha+\beta = \gamma^3$ by $\beta$, we may assume that $\beta = 1$. We get an equation of the form \eqref{alphaplusoneequalsgammacubed} and by assumption we get that $v_\mathfrak{q}(\alpha)\leqslant 3v_\mathfrak{q}(3)$ and hence $|v_\mathfrak{q}(\alpha\beta^{-1})| = v_\mathfrak{q}(\alpha) \leqslant 3v_\mathfrak{q}(3)$.
    In all possible cases we find that $|v_\mathfrak{q}(\alpha\beta^{-1})|\leqslant 3v_\mathfrak{q}(3)$. The result then follows from Theorem \ref{main theorem}. 
\end{proof}
\subsection{Quadratic imaginary number fields and proof of Corollary \ref{corollary: quadratic imaginary fields asymptotic solutions}}
The $S_K$-integer equation occurring in Theorem \ref{theorem: special case} is solvable for quadratic imaginary fields which only have one prime above $3$. This allows us to say something about asymptotic solutions to \eqref{maineqn} for these types of number fields. The following result solves this $S_K$-integer equation. 
\begin{proposition}\label{proposition: S-unit equation for quadratic imaginary}
    Let $d\geqslant2$ be a positive square-free integer such that $d\equiv 1\textup{ mod }3$. Let $K = \mathbb{Q}(\sqrt{-d})$ and let $S$ be the set of primes in $K$ above $3$. The only solutions to the $S$-unit equation $\alpha+1=\gamma^3$ with $(\alpha,\gamma)\in\mathcal{O}_S^\times\times\mathcal{O}_S$ are $(-1,0)$ and $(-9,-2)$. 
\end{proposition}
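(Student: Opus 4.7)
The plan is to exploit the very rigid structure of $\mathcal{O}_S^\times$ in this setting to reduce the $S$-unit equation to a Diophantine equation over $\mathbb{Z}$, and then finish by an elementary $3$-adic factorization.

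First I would pin down the arithmetic of $3$ in $K$. Since $d\geqslant 2$ is squarefree and $d\equiv 1\pmod{3}$, we have $3\nmid\disc(K)$, and the Legendre symbol $\left(\tfrac{-d}{3}\right)=\left(\tfrac{2}{3}\right)=-1$ shows that $3$ is inert in $K$. Hence $S=\{\mathfrak{q}\}$ with $\mathfrak{q}=(3)$ principal, and because $d\notin\{1,3\}$ we also have $\mathcal{O}_K^\times=\{\pm 1\}$. Combining these gives
\[
\mathcal{O}_S^\times=\{\pm 3^k:k\in\mathbb{Z}\},
\]
so every admissible $\alpha$ is already a rational number.

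Next I would force $\gamma$ to be rational as well. The relation $\alpha+1=\gamma^3$ gives $\gamma^3\in\mathbb{Q}$, and for $\gamma\neq 0$ the ratio $\sigma(\gamma)/\gamma$, where $\sigma$ is the nontrivial element of $\Gal(K/\mathbb{Q})$, is a cube root of unity contained in $K$. Since $d\neq 3$, $K$ does not contain $\zeta_3$, so $\sigma(\gamma)=\gamma$ and $\gamma\in\mathbb{Q}\cap\mathcal{O}_S=\mathbb{Z}[\tfrac{1}{3}]$.

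It then remains to enumerate $\gamma\in\mathbb{Z}[\tfrac{1}{3}]$ with $\gamma^3-1=\pm 3^k$. Writing $\gamma=u/3^n$ with $u\in\mathbb{Z}$, $\gcd(u,3)=1$, $n\geqslant 0$, and comparing $3$-adic valuations forces either $n=0$ and $k\geqslant 0$, or $n\geqslant 1$ with $k=-3n$; in either case the equation rewrites as $u^3=\epsilon\,3^m+1$ for some $m\geqslant 0$ and $\epsilon\in\{\pm 1\}$. Factoring via $u^3-1=(u-1)(u^2+u+1)$ or $u^3+1=(u+1)(u^2-u+1)$, both factors must be powers of $3$; substituting $u=\pm 1+3t$ and computing $v_3$ of the quadratic factor shows that this factor equals $3$ itself except in trivial boundary cases, which pins the solutions down to $u=0$ (giving $(\alpha,\gamma)=(-1,0)$) and $u=-2$, $m=2$ (giving $(\alpha,\gamma)=(-9,-2)$). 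Finally I would check that in the $n\geqslant 1$ branch the constraint $m=3n$ is incompatible with $m=2$, so no additional solutions appear.

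The only conceptually subtle step is the rationality of $\gamma$, which is handled cleanly by the absence of $\zeta_3$ in $K$; everything else amounts to routine $3$-adic bookkeeping.
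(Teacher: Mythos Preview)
Your argument is correct and considerably more efficient than the paper's. The decisive step is the rationality observation: once you note that $\gamma^3=\alpha+1\in\mathbb{Q}$ and that $K$ contains no primitive cube root of unity (since $d\equiv 1\pmod 3$ forces $d\neq 3$), the relation $\sigma(\gamma)/\gamma\in\mu_3\cap K=\{1\}$ drops the whole problem into $\mathbb{Z}[\tfrac13]$, after which the elementary factorization $(u-1)(u^2+u+1)=\pm 3^m$ together with $v_3(u^2+u+1)=1$ finishes things off.

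The paper does \emph{not} make this reduction. It keeps $\gamma\in\calO_S$ throughout, disposes of the case $n\leqslant 0$ by a factorization in $\calO_K$, and for $n>2$ passes to the biquadratic field $L=K(\zeta_3)$, factoring $\gamma^3-1=(\gamma-1)(\gamma-\zeta_3)(\gamma-\zeta_3^2)$, computing $\frakp$-adic valuations of these factors at the prime above $3$ in $L$, and then invoking a Galois/norm argument through the real subfield $\Q(\sqrt{3d})$ to reach a contradiction. This machinery is unnecessary here precisely because $\alpha$ is already rational, a feature your proof exploits and the paper's does not. Your route is shorter and more transparent; the paper's approach, by contrast, has the flavor of a method that would still function if $\alpha$ were a genuine $S$-unit of $K$ not lying in $\Q$, though in the present setting that generality is never needed.

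A minor presentational point: you should state explicitly that $\gamma=0$ (giving $(-1,0)$) is handled before writing $\gamma=u/3^n$ with $\gcd(u,3)=1$, and that in the $n\geqslant 1$ branch the sign adjustment replacing $u$ by $-u$ is what puts the equation into the form $u^3=\epsilon\,3^m+1$. These are the ``trivial boundary cases'' you allude to; spelling them out costs one line each.
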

\begin{proof}
    Let $(\alpha,\gamma)\in\mathcal{O}_S^\times\times\mathcal{O}_S$ be a solution to $\alpha+1=\gamma^3$. The condition that $d\equiv 1\mod 3$ is precisely the condition that $3$ is inert in $K$. Therefore $S = \{(3)\}$, $\mathcal{O}_S = \mathcal{O}_K[\frac{1}{3}]$ and $\mathcal{O}_S^\times = \{\pm1\}\times\langle3\rangle$. Hence, $\alpha = \pm3^n$ for some $n\in\mathbb{Z}$. First suppose that $n\leqslant 0$. Then $\pm3^n = \pm\tfrac{1}{3^k}$ where $k = |n|$. It follows that \begin{equation}\gamma^3 = \frac{\pm1 + 3^k}{3^k}.\label{gamma^3}\end{equation} Therefore, $3v_3(\gamma) = -k$ which shows that $k$ is divisible by $3$ and that $\gamma=\tfrac{c}{3^{k/3}}$ for some $c\in\mathcal{O}_K$. Then, \eqref{gamma^3} shows that $c^3 = \pm1 + 3^k$. We have
    $$\pm1 = c^3-3^k = \big(c-3^{\tfrac{k}{3}}\big)\big(c^2 + 3^{\tfrac{k}{3}}c + 3^{\tfrac{2k}{3}}\big).$$
    This is an equality in $\mathcal{O}_K$ and hence it follows that $c-3^{\tfrac{k}{3}}\in\mathcal{O}_K^\times = \{\pm1\}$. We obtain $c = \pm1 + 3^{\tfrac{k}{3}}$ and $$c^3 = \pm1 + 3^{\tfrac{k}{3}+1} \pm 3^{\tfrac{2k}{3}+1} +3^k\quad\Longleftrightarrow\quad c^3 - 3^k=\pm1 + 3^{\tfrac{k}{3}+1} \pm 3^{\tfrac{2k}{3}+1}.$$ The left hand side of the last equation has absolute value equal to $1$. We distinguish between two cases. If $c - 3^{\tfrac{k}{3}} = 1$, then the above gives that $$|1 + 3^{\tfrac{k}{3}+1} + 3^{\tfrac{2k}{3}+1}| = 1 + 3^{\tfrac{k}{3}+1} + 3^{\tfrac{2k}{3}+1} = 1.$$ This is clearly only possible if $k=0$. Suppose that instead $c - 3^{\tfrac{k}{3}} = -1$. The sequence $$a_n = -1 + 3^{n+1} - 3^{2n+1}$$ is strictly decreasing in $n$. Further, $a_1 = -19$ so if $|a_n| = 1$ we must have $n = 0$. From this we see that the equality $$|-1+3^{\tfrac{k}{3}+1} - 3^{\tfrac{2k}{3} + 1}| = 1$$ implies $k = 0$. In either case, we find that $k=0$. We conclude that $n\geqslant0$. Next, suppose for a contradiction that $n>2$. Since $\gamma^3 = \pm3^n + 1$, we get $v_3(\gamma)=0$ and hence $\gamma\in\mathcal{O}_K$. Write \begin{equation}\pm3^n = (\gamma-1)(\gamma-\zeta_3)(\gamma-\zeta_3^2)\label{equation: factorization of 3^n}\end{equation} in $L \coloneqq K(\zeta_3)$. Define
    \begin{align*}
        x = \gamma-1,\quad y=\gamma-\zeta_3 \quad\text{and}\quad z=\gamma-\zeta_3^2.
    \end{align*}
    Then $x-y = \zeta_3 - 1$ and $y-z = \zeta_3(\zeta_3-1)$. Let $\sigma\in\Gal(L/K)$ be the generator of $\Gal(L/K)$ i.e. $\sigma$ is the element of $\Gal(L/K)$ such that $\sigma(\zeta_3) = \zeta_3^2$. Let $\mathfrak{p} = (\zeta_3-1)$ be the prime above $3$ in $L$. Then $\sigma(\mathfrak{p}) = \mathfrak{p}$ and since $\sigma(z) = y$ we have $v_\mathfrak{p}(z) = v_\mathfrak{p}(y)\eqqcolon r$. Hence, we get  $$1 = v_\mathfrak{p}(\zeta_3(\zeta_3-1)) = v_\mathfrak{p}(y-z)\geqslant r.$$ We claim that $r$ is equal to $1$. Suppose for a contradiction that $r\leqslant 0$. Since we have $v_\mathfrak{p}(3) = 2$, then $$v_\mathfrak{p}(x) \geqslant v_\mathfrak{p}(xyz) = v_\mathfrak{p}(3^n) > 4.$$
    Using this, we obtain $$1=v_\mathfrak{p}(\zeta_3-1) = v_{\mathfrak{p}}(x-y) =\min\{v_\mathfrak{p}(x),v_\mathfrak{p}(y)\} =r\leqslant 0,$$ a contradiction. We conclude that $r=1$. It follows that $2n = v_\mathfrak{p}(xyz) = v_\mathfrak{p}(x) + 2$ and hence $v_\mathfrak{p}(x) > 2$. From the factorization \eqref{equation: factorization of 3^n} and the fact that $n\geqslant0$, we see that $x$ and $y$ are supported on $\mathfrak{p}$. Using these facts, define 
    $$u = \frac{x}{\zeta_3-1}\in\mathcal{O}_L\quad\text{and}\quad v=\frac{-y}{\zeta_3-1}\in\mathcal{O}_L^\times.$$ Note that $u$ is indeed integral since $x$ and $\zeta_3-1$ are only supported on $\mathfrak{p}$ with $v_\mathfrak{p}(x)>2$ and $v_\mathfrak{p}(\zeta_3-1)$. Similarly, $y$ is a unit since $v_\mathfrak{p}(y) = 1$. We have $\mathfrak{p}^2 = (3)\mid u$ and since $u+v = 1$ we get $v\equiv1\mod 3$. Let $\tau$ be the generator of $\Gal(L/\mathbb{Q}(\zeta_3))$. Then, since $\tau(\mathfrak{p}) = \mathfrak{p}$, it follows that $(3)\mid \tau(u)$ and hence also $\tau(v) \equiv 1\mod3$. Therefore, $N_{L/\mathbb{Q}(\zeta_3)}(v)=\tau(v)v\equiv1\mod3$. Since $v\in\mathcal{O}_L^\times$, we also have $N_{L/\mathbb{Q}(\zeta_3)}(v) \in\mathcal{O}_{\mathbb{Q}(\zeta_3)}^\times = \langle-\zeta_3\rangle$. Combining these two facts, it follows that $N_{L/\mathbb{Q}(\zeta_3)}(v) = 1$. Let $F\coloneqq\mathbb{Q}(\sqrt{3d})$ denote the unique totally real subfield of $L$. Suppose that $v\in\mathcal{O}_L^\times\minus \mathcal{O}_F^\times = \zeta_3\mathcal{O}_F^\times$. Then $N_{L/\mathbb{Q}(\zeta_3)}(v)$ is a multiple of $\zeta_3$ which contradicts to $N_{L/\mathbb{Q}(\zeta_3)}(v) = 1$. Therefore, $v\in\mathcal{O}_F^\times$ and hence $u = v-1\in\mathcal{O}_F$. Since $x\in\mathcal{O}_K$ and $u$ is a quotient of $x$ and $\zeta_3-1$, one readily verifies that this is a contradiction. We conclude that $n\in\{0,1,2\}$. Hence, we obtain $$\pm3^n +1\in\{0,2,4,-2,10,-8\}.$$ The only cubes in this set are $0$ and $-8$. This concludes the proof.
\end{proof}
Now, we can prove Corollary \ref{corollary: quadratic imaginary fields asymptotic solutions} as a consequence of the above proposition.
\begin{proof} [\textbf{Proof of Corollary \ref{corollary: quadratic imaginary fields asymptotic solutions}}]
    Let $S = \{(3)\}$ consist of the only prime above $3$ in $K$, from Proposition \ref{proposition: S-unit equation for quadratic imaginary} it follows that the solutions to $\alpha+1 = \gamma^3$ in $\mathcal{O}_S^\times\times\mathcal{O}_S$ are $(-1,0)$ and $(-9,-2)$. The result then follows from Theorem \ref{theorem: special case}.
\end{proof}

	\section{Effective Results: Proof of Theorem \ref{thm:sample}}\label{effres}

     In this section, we will prove Theorem \ref{thm:sample}. Recall that Proposition \ref{lifting of forms}  states that there is a constant $B(\mathfrak{N})$ such that, for any $p>B(\mathfrak{N})$, all weight two, mod $p$ eigenforms lift to complex ones. Therefore, lifting mod $p$ eigenforms to complex ones is guaranteed when $p>B(\mathfrak{N})$.
     
     The main point of the proof is to make this bound explicit for the fields considered in  Theorem \ref{thm:sample}.  The procedure  and the theoretical background behind this lifting argument are explained at the beginning of Section 5 of \cite{IKO2} and we refer the reader there to avoid repetition.
 
     Let $K=\mathbb{Q}(\sqrt{-d})$  where $d \in  \{7,19,43,67\}$ and let $\lambda$ denote the prime ideal of $K$ lying above $3$ as introduced in the statement of Theorem \ref{thm:sample}. Suppose that $(a,b,c)\in\mathcal{O}_K^3$ is a non-trivial solution to the equation $a^p+db^p=c^3$ where $(a,db,c)$ is primitive and $\lambda\mid b$. Let $E$ be the Frey curve attached to such a solution, i.e. the curve given in Equation \ref{the frey curve} and $\overline{\rho}_{E,p}$ be the residual Galois representation induced by the action of $G_K$ on $E[p]$. 
     
     Now, we want to apply Conjecture~\ref{conj1} to $\overline{\rho}_{E,p}$ . Note that, in order to do this we need $\overline{\rho}_{E,p}$  to be absolutely irreducible.  Recall that the prime ideal $\lambda$ of $K$ lying above $3$ divides $b$. In this case, the associated Frey curve is semistable by Lemma \ref{semist}. By Proposition \ref{irrCase2}, $\overline{\rho}_{E,p}$ is irreducible when $p>20$. By Corollary \ref{absirr} Part (1), $\overline{\rho}_{E,p}$ is absolutely irreducible if it is irreducible. Therefore, Conjecture ~\ref{conj1} is applicable to $\overline{\rho}_{E,p}$.

     Using Conjecture ~\ref{conj1},  we deduce that there exists a weight two, mod $p$
eigenform $\theta$ over $K$ of level $\frakN_E$ such that for all primes $\frakq$ coprime to $p\frakN_E$, we have
\[
\text{Tr}(\overline{\rho}_{E,p}(\Frob_{\frakq}))=\theta(T_{\frakq}),
\]
where $T_{\frakq}$ denotes the Hecke operator at $\frakq$.

Recall that $\frakN_E$ denotes the Serre conductor of the residual representation $\overline{\rho}_{E,p}$. By Lemma \ref{semist}, $\frakN_E$ is a power of $\lambda$ times $\frakD$ where $\frakD$ is the unique prime of $K$ lying over $d$. Since $\lambda \mid  b$ and $p$ is big enough, $\frakN_E= \lambda^i \frakD$ where $i=0,1$ by Lemma \ref{semist} Part 1.  We now aim to lift this mod $p$ Bianchi modular form to a complex one.

We compute the abelianizations $\Gamma_0(\frakN_E)^{\rm ab}$ by implementing the algorithm of {\c{S}}eng{\"u}n \cite{Sen11}. One can access to the relevant \texttt{Magma} codes online at \url{https://github.com/ekinozman/ekin}.  The biggest primes $\ell$ that appear as orders of torsion elements of $\Gamma_0(\frakN_E)^{\rm ab}$ are as follows: 

\begin{itemize}
\item $\frakN_E=\frakD$ or $\frakN_E=\lambda \frakD$, $d=7, 19$ : $\ell=3.$
\item $\frakN_E=\frakD$ and $d=43$: $\ell=3.$
\item $\frakN_E=\lambda \frakD$, $d=43$ : $\ell=2531.$
\item $\frakN_E=\frakD$ and $d=67$: $\ell=7.$
\item $\frakN_E=\lambda \frakD$, $d=67$ : $\ell=86338229.$
\end{itemize}

Assume that $\ell>86338229$. It then follows that the $p$-torsion subgroups of $\Gamma_0(\frakN_E)^{\rm ab}$ are all trivial, so the mod $p$ eigenforms must lift to complex ones. The procedure explained at the beginning of Section 5 of \cite{IKO2} together with Conjecture \ref{conj1} imply that there exists a (complex) Bianchi modular form $\frakf$ over $K$ of level $\frakN_E$ such that for all prime ideals $\mathfrak{q}$ coprime to $p\frakN_E$, we have

$${\rm Tr}(\overline{\rho}_{E,p}({\rm Frob}_{\mathfrak{q}})) \equiv \mathfrak{f}(T_{\mathfrak{q}}) \pmod{\mathfrak{p}},$$
where $\frakp$ is a prime ideal of $\mathbb{Q}_\frakf$ lying above $p$ and $\mathbb{Q}_\frakf$ is the number field generated by the eigenvalues. Let us denote this relation by $\overline{\rho}_{E,p}\sim \overline{\rho}_{\frakf,\frakp}$.

By Conjecture~\ref{conj1} and the lifting argument above, we see that  $\overline{\rho}_{E,p}\sim \overline{\rho}_{\frakf,\frakp}$ and the corresponding Bianchi modular form $\frakf$ is of level $ \frakD $ or $\lambda  \frakD$. The final step of the proof is eliminating such forms and getting a contradiction that none of these forms can be associated to a putative solution of the equation.  To achieve this we need the following lemma:

\begin{lemma}\label{ideal_Bf}Let us fix a prime ideal $\mathfrak{q} \nmid \lambda  \frakD$ of $K$, and let $\frakf$ be a newform of level dividing $\lambda  \frakD$. Define the following set 
	$$\mathcal{A}(\frakq)=\{a \in \mathbb{Z}\; :\; |a|\leq 2\sqrt{\Norm(\mathfrak{q})},\; \Norm(\mathfrak{q})+1-a \equiv 0 \pmod 3\}.$$ 
	If $\overline{\rho}_{E,p}\sim \overline{\rho}_{\frakf,\frakp}$, where $\frakp$ is the prime ideal of $\mathbb{Q}_{\frakf}$ lying above $p$, then $\frakp$ divides
	$$B_{\frakf,\mathfrak{q}}:=\Norm(\mathfrak{q})\cdot\left(\Norm(\mathfrak{q} +1)^2-\mathfrak{f}(T_{\mathfrak{q}})^2\right)\cdot\prod_{a \in \mathcal{A}(\frakq)}(a-\mathfrak{f}(T_{\mathfrak{q}}))\mathcal{O}_{\mathbb{Q}_{\frakf}}.$$
\end{lemma}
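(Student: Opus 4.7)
The plan is a case analysis on the reduction type of the Frey curve $E$ at $\mathfrak{q}$, converting information about $\mathrm{Tr}(\overline{\rho}_{E,p}(\mathrm{Frob}_\mathfrak{q}))$ into a divisibility by $\mathfrak{p}$ via the congruence $\mathfrak{f}(T_\mathfrak{q}) \equiv \mathrm{Tr}(\overline{\rho}_{E,p}(\mathrm{Frob}_\mathfrak{q})) \pmod{\mathfrak{p}}$ supplied by the relation $\overline{\rho}_{E,p} \sim \overline{\rho}_{\mathfrak{f},\mathfrak{p}}$. First I would dispose of the case $\mathfrak{q} \mid p$: there $\mathfrak{p} \mid p \mid \Norm(\mathfrak{q})$, so the leading factor of $B_{\mathfrak{f},\mathfrak{q}}$ already absorbs $\mathfrak{p}$ and there is nothing to prove. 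Assume henceforth $\mathfrak{q} \nmid p$; since $\mathfrak{q} \nmid \lambda \mathfrak{D}$, Lemma \ref{semist} tells us that $E$ is semistable at $\mathfrak{q}$ and that $\mathfrak{q} \mid \mathcal{N}_E$ occurs precisely when $\mathfrak{q} \mid ab$.

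In the good reduction subcase $\mathfrak{q} \nmid ab$, I would invoke the Hasse bound $|a_\mathfrak{q}(E)| \leq 2\sqrt{\Norm(\mathfrak{q})}$ together with the $K$-rational point $(0,0)$ of order $3$ on $E$ provided by Lemma \ref{semist}: its reduction gives an $\mathbb{F}_\mathfrak{q}$-point of order $3$, so $3 \mid \#E(\mathcal{O}_K/\mathfrak{q}) = \Norm(\mathfrak{q}) + 1 - a_\mathfrak{q}(E)$. These two conditions together force $a_\mathfrak{q}(E) \in \mathcal{A}(\mathfrak{q})$, and matching the Frobenius trace against $\mathfrak{f}(T_\mathfrak{q})$ modulo $\mathfrak{p}$ yields $\mathfrak{p} \mid a_\mathfrak{q}(E) - \mathfrak{f}(T_\mathfrak{q})$, which in turn divides $\prod_{a \in \mathcal{A}(\mathfrak{q})}(a - \mathfrak{f}(T_\mathfrak{q}))$. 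In the multiplicative reduction subcase $\mathfrak{q} \mid ab$, the discriminant formula $\Delta_E = 3^3 A B^3 (ab^3)^p$ combined with $\mathfrak{q} \nmid \lambda \mathfrak{D}$ gives $v_\mathfrak{q}(\Delta_E) = p \cdot v_\mathfrak{q}(ab^3)$, so $p \mid v_\mathfrak{q}(\Delta_E)$. The theory of the Tate curve then forces $\overline{\rho}_{E,p}|_{G_{K_\mathfrak{q}}}$ to be unramified with $\mathrm{Tr}(\overline{\rho}_{E,p}(\mathrm{Frob}_\mathfrak{q})) \equiv \pm(\Norm(\mathfrak{q}) + 1) \pmod{p}$, and hence $\mathfrak{p}$ divides $(\Norm(\mathfrak{q}) + 1)^2 - \mathfrak{f}(T_\mathfrak{q})^2$.

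In every case $\mathfrak{p}$ divides at least one factor of $B_{\mathfrak{f},\mathfrak{q}}$, which is the claim. I do not anticipate a serious obstacle; the only point requiring care is the Tate curve step, where one has to verify both that $p \mid v_\mathfrak{q}(\Delta_E)$ (immediate from primitivity of $(a,b,c)$ and $\mathfrak{q} \nmid \lambda \mathfrak{D}$) and that the resulting mod $p$ representation is unramified with the stated trace (standard for semistable reduction once $p$ divides the valuation of the minimal discriminant). The value of the statement is that it packages the good, multiplicative, and $\mathfrak{q} \mid p$ possibilities into a single effective divisor $B_{\mathfrak{f},\mathfrak{q}}$, which is precisely what Section \ref{effres} needs in order to eliminate each candidate newform at a finite list of auxiliary primes $\mathfrak{q}$.
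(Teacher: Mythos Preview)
Your proposal is correct and follows essentially the same approach as the paper's own proof: the same three-way split into $\mathfrak{q}\mid p$, good reduction, and multiplicative reduction, with the same appeals to the Hasse bound, the $3$-torsion point, and the Tate-curve trace formula. If anything, you supply slightly more detail in the multiplicative case (explicitly noting $p\mid v_\mathfrak{q}(\Delta_E)$ and unramifiedness) than the paper does.
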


\begin{proof}
	If $\mathfrak{q}\mid p$, then $\Norm(\mathfrak{q})$ is a power of $p$, hence $\frakp \mid B_{\frakf,\mathfrak{q}} $. 
    
    Now assume that $\mathfrak{q}$ does not divide $p$. Then the Frey curve $E$ has semistable reduction at $\mathfrak{q}$. If it has a good reduction, then we have
	$${\rm Tr}(\overline{\rho}_{E,p}({\rm Frob}_{\mathfrak{q}})) \equiv a_{\mathfrak{q}}(E) \equiv \mathfrak{f}(T_{\mathfrak{q}}) \pmod{\frakp}.$$
	
	Note that the Frey curve $E$ has a $3$-torsion point, so $3$ divides $\#E(\mathbb{F}_{\mathfrak{q}})=\Norm(\mathfrak{q})+1=a_{\mathfrak{q}}(E)$. By Hasse-Weil bound, we know that $|a_{\mathfrak{q}}(E)|\leq 2\sqrt{\Norm(\mathfrak{q})}$. So $a_{\mathfrak{q}}(E)$ belongs to the finite set $\mathcal{A}(\frakq)$. Finally, suppose that $E$ has multiplicative reduction at $\mathfrak{q}$. Then by comparing the traces of the images of Frobenius at $\mathfrak{q}$ under $\overline{\rho}_{E,p}$, we have
	$$\pm(\Norm(\mathfrak{q})+1) \equiv \mathfrak{f}(T_{\mathfrak{q}}) \pmod \frakp.$$
	It then follows that $\frakp$ divides $(\Norm(\mathfrak{q})+1)^2-\mathfrak{f}(T_{\mathfrak{q}})^2$. Hence, $\mathfrak{p}$ divides $B_{\frakf,\mathfrak{q}}$.
\end{proof}

This brings us to the last step of the proof of Theorem 
\ref{thm:sample}.

Using \texttt{Magma}\footnote{see the relevant code and outputs on https://github.com/ekinozman/ekin}, we computed the cuspidal newforms at the predicted levels, the
fields \(\Q_\frakf\), and the eigenvalues \(\frakf(T_{\mathfrak q})\) at prime ideals \(\mathfrak q \in S\) where $S$ is the set of prime ideals $\mathfrak q \neq \lambda$ of norm less
than \(50\) for each imaginary quadratic number field \(K=\mathbb{Q}(\sqrt{-d})\) with
\(d\in\{7,19,43,67\}\).
For each modular form \(\frakf\) of level \(\lambda \frakD\) or of level \(\frakD\)  , we computed the ideal
$ B_\frakf= \Sigma_{\frakq \in S} B_{\frakf, \frakq}.$

Using Lemma \ref{ideal_Bf} we can eliminate the forms as follows:

\begin{itemize}
    \item $K=\Q(\sqrt{-7}):$ There are no Bianchi modular forms of level $ \frakD$ and there is one Bianchi modular form $\mathfrak{f}$ at level $\lambda  \frakD.$ We use Lemma \ref{ideal_Bf} and compute $C_{\frakf}=\Norm_{\Q_\mathfrak{f}/\Q}(B_{\frakf})$ and see that the largest prime divisor of $C_{\frakf}$ is $11$. The bound we get to obtain by Proposition \ref{irrCase2} is $20$. Therefore if $p>20$, the equation $a^p+7b^p=c^3$ has no non-trivial solutions over $\Q(\sqrt{-7})$ such that $(a, db, c)$ is primitive and $\lambda \mid  b.$
    \item $K=\Q(\sqrt{-19}):$ There is one Bianchi modular form $\frakfp$ of level $ \frakD$ and there are three Bianchi modular forms $\frakf_1,\frakf_2,\frakf_3$ at level $\lambda  \frakD.$ We compute $C_{\frakf_i}=\Norm_{\Q_\mathfrak{f}/\Q}(B_{\frakf_i,\mathfrak{q}})$ for $i=1,2,3$ using Lemma \ref{ideal_Bf} and find that the largest prime divisor of $C_{\frakf_i}$ is $7$.  However, $C_\frakfp=0$. Therefore, Lemma \ref{ideal_Bf} is not helpful to eliminate $\frakfp$. To eliminate the form $\frakfp$ we use the following `inertia argument'. Assume that the Frey curve $E$ attached to a putative solution $(a,b,c)$, where $(a, db, c)$ is primitive and $\lambda \mid b$, is associated to the Bianchi modular form $\frakfp$.

 The LMFDB label of the Bianchi modular form $\mathfrak{f'}$ -which is non-trivial and new- is 	$2.0.19.1-19.1-a$. Then Conjecture \ref{conj2}  predicts that there is either an elliptic curve of conductor $ \frakD$ or a fake elliptic curve of conductor $ \frakD^2$ associated to the form $\frakfp$.
 
The elliptic curves in the isogeny classes given in the LMFDB label $2.0.19.1-19.1-a$ correspond to the form $\frakfp$. All the elliptic curves in this isogeny classes have potentially good reduction at $\lambda$. Since the elliptic curve $E$ has potentially multiplicative reduction at the prime $\lambda$, we get a contradiction. To be able to finish the proof we need to eliminate the case that $\frakp$ corresponds to a fake elliptic curve, as it is done in \cite{SS}. 
    Recall  that the Frey curve $E$ has potentially multiplicative reduction at $\lambda$  by Lemma \ref{p divides image of inertia} and $p\nmid v_{\lambda}(j_E)$ as shown in the proof of Corollary \ref{absirr}. 
        Assume that $\mathfrak{f'}$  corresponds to a fake elliptic curve $A_\mathfrak{f'}$. Note that since $\lambda \mid b$,  $p \mid \# \bar{\rho}_{E, p}\left(I_{\lambda}\right)$ by Lemma \ref{p divides image of inertia} for $p > 7$. Since $\mathfrak{f'}$ corresponds to the fake elliptic curve $A_\mathfrak{f'}$, Theorem 4.2 of \cite{SS} yields $\#\bar{\rho}_{A_\mathfrak{f'}, p}\left(I_{\lambda}\right)\le 24$. As $\bar{\rho}_{E, p}\sim \bar{\rho}_{A_\mathfrak{f'}, p}$ and $p> 7$, we have a contradiction. 
        
    Therefore as in the case of $\Q(\sqrt{-7})$, if $p>20$, the equation $a^p+19b^p=c^3$ has no non-trivial solutions over $\Q(\sqrt{-19})$ such that $(a, db, c)$ is primitive and $\lambda \mid  b.$

    \item $K=\Q(\sqrt{-43}):$ There are three Bianchi modular forms of level $ \frakD$ and five Bianchi modular forms of level $\lambda  \frakD.$ Using Lemma \ref{ideal_Bf}, we compute the corresponding $C_{\frakf}$'s and the highest prime divisor we see is $11.$  However, in order to lift the mod $p$ eigenforms to complex ones, we need $p>2531$ as computed in the beginning of Section \ref{effres}.  Therefore if $p>2531$, the equation $a^p+43b^p=c^3$ has no non-trivial solutions over $\Q(\sqrt{-43})$ such that $(a, db, c)$ is primitive and $\lambda \mid  b.$

    \item $K=\Q(\sqrt{-67}):$ There are three Bianchi modular forms of level $ \frakD$ and five  Bianchi modular forms of level $\lambda  \frakD.$ Using Lemma \ref{ideal_Bf}, we compute the corresponding $C_{\frakf}$'s and the highest prime divisor we see is $17$.  However, in order to lift the mod $p$ eigenforms to complex ones, we need $p>86338229$ as computed in the beginning of Section \ref{effres}. Therefore if $p>86338229$, the equation $a^p+67b^p=c^3$ has no non-trivial solutions over $\Q(\sqrt{-67})$ such that $(a, db, c)$ is primitive and $\lambda \mid  b.$ 
\end{itemize}

This finishes the proof of Theorem \ref{thm:sample}.
\bibliographystyle{plain}

\bibliography{reff.bib}

\end{document}